\documentclass[11pt]{amsart}
\usepackage{enumerate}
\usepackage{amsthm}
\usepackage{amsmath,amssymb,latexsym,amscd}
\usepackage{tikz}

\makeatletter
\@namedef{subjclassname@2010}{%
  \textup{2010} Mathematics Subject Classification}
\makeatother
\theoremstyle{definition}
\newtheorem{theorem}{Theorem}[section]

\newtheorem{proposition}[theorem]{Proposition}

\theoremstyle{definition}
\theoremstyle{definition}

\newtheorem{example}[theorem]{Example}
\frenchspacing
\usepackage{indentfirst}

\DeclareMathOperator{\im}{im}
\def\Z{\mathbb{Z}}

\def\S{\mathbb{S}}

\textwidth=15cm
\textheight=20cm
\parindent=16pt
\oddsidemargin=-0.5cm
\evensidemargin=-0.5cm
\topmargin=-0.5cm

\begin{document}
\baselineskip=17pt
\title[]{Finitistic spaces with  orbit space a  product of  projective spaces}
\author[Anju Kumari and Hemant Kumar Singh]{ Anju Kumari and Hemant Kumar Singh}
\address{{\bf Anju Kumari}, 
	Department of Mathematics, University of Delhi,
	Delhi -- 110007, India.}
\email{anjukumari0702@gmail.com}
\address{{\bf Hemant Kumar Singh}, 
	Department of Mathematics, University of Delhi,
	Delhi -- 110007, India.}
\email{hemantksingh@maths.du.ac.in}

\date{}
\thanks{This paper is supported by the Science and Engineering Research Board (Department of Science and Technology, Government of India) with reference number- EMR/2017/002192}
\begin{abstract}
	Let $G=\Z_2$ act freely on a finitistic space $X$. If the mod 2 cohomology of $X$ is isomorphic to the real projective space $\mathbb{RP}^{2n+1}$ (resp. complex projective space $\mathbb{CP}^{2n+1}$) then the mod 2 cohomology of orbit spaces  of  these free actions are $\mathbb{RP}^1\times \mathbb{CP}^n$ (resp. $\mathbb{RP}^2\times \mathbb{HP}^n$) \cite{Hemant2008}. In this paper, we have discussed  converse of these result. We have showed that  if the mod 2 cohomology of the orbit space $X/G$ is $ \mathbb{RP}^1\times \mathbb{CP}^n$ (resp.  $ \mathbb{RP}^2\times\mathbb{HP}^n$) then the mod 2 cohomology of $X$ is $\mathbb{RP}^{2n+1}$ or $\mathbb{S}^1\times \mathbb{CP}^n$ (resp. $\mathbb{CP}^{2n+1}$ or $\mathbb{S}^2\times \mathbb{HP}^n$).

	A partial converse of  free involutions on the product of projective spaces $\mathbb{RP}^n\times \mathbb{RP}^{2m+1}$
	 (resp. $\mathbb{CP}^n\times\mathbb{CP}^{2m+1}$)  are also discussed.

\end{abstract}
\subjclass[2010]{Primary 55T10; Secondary 57S99 }

\keywords{Free action; Finitistic space; Leray-Serre spectral sequence; Gysin sequence}

\maketitle
\section {Introduction}
Let $G$ be a compact Lie group acting on a space $X$. An  interesting problem is to classify $X$ when $G$ acts freely on $X$ and the orbit space $X/G$ is known. First,  J. C. Su  \cite{Su1963} worked in this direction. He proved that if $G=\Z_2 $ or $\mathbb{S}^1$,  acts freely on a space $X$ and the orbit space $X/G$   is the mod 2 or integral cohomology projective space $\mathbb{FP}^n$, $\mathbb{F}=\mathbb{R}$ or $\mathbb{C}$, then the space $X$ is the mod 2 cohomology $n$-sphere $\mathbb{S}^{n}$ or integral cohomology $(2n+1)$-sphere $\mathbb{S}^{2n+1}$. Su also proved that if $G=\mathbb{Z}_q$, $q$ an odd prime, and $X/G=L_q^{2n+1}$ (lens space)  then $X$ is the  mod $q$ cohomology $(2n+1)$-sphere $\mathbb{S}^{2n+1}$. It has also been proved that if  $G=\S^3$ and $X/G=\mathbb{HP}^n$ (quaternion projective space), then $X$ is either integral cohomology $\S^{4n+3}$ or $\S^3 \times\mathbb{HP}^n$ \cite{Anju 2}. Harvey \cite{Harvey} proved that for semi-free actions of $G=\S^1$ when the  orbit space $X/G$ is  a   
simply connected 4-manifold, then the total space is a  simply connected 5-manifold.

The orbit spaces of free actions of $G=\Z_2$ on   projective spaces $X=\mathbb{FP}^{2n+1}$ and the product of   projective spaces $X=\mathbb{FP}^n\times\mathbb{FP}^{m}$, where $\mathbb{F}=\mathbb{R}$ or $\mathbb{C}$, have been  discussed in \cite{Hemant2008,Mahender2010}. For $G=\Z_2$ and $X=\mathbb{RP}^{2n+1}$ (resp.  $\mathbb{CP}^{2n+1}$), it is proved that the  mod 2 cohomology of the orbit space $X/G$ is $\mathbb{RP}^1\times \mathbb{CP}^n $  (resp. $ \mathbb{RP}^2\times \mathbb{HP}^n$) \cite{Hemant2008}. 
We have discussed  converse of these results. We have determined the cohomology classification  of a finitistic space $X$ if the mod 2 cohomology of orbit space $X/G$ is  $\mathbb{RP}^1\times \mathbb{CP}^n$ or  $ \mathbb{RP}^2\times \mathbb{HP}^n$.    
 For $G=\Z_2$ and $X=\mathbb{RP}^n\times\mathbb{RP}^{2m+1}$ (resp. $\mathbb{CP}^n\times\mathbb{CP}^{2m+1}$), one of the possibilities of the mod 2  cohomology of the orbit space is $\mathbb{RP}^1\times\mathbb{RP}^n\times  \mathbb{CP}^m$ (resp. $\mathbb{RP}^2 \times\mathbb{CP}^n\times \mathbb{HP}^m $) \cite{Mahender2010}. In this paper, we  have also  determined $X$ equipped with a free action of $G$ when the mod 2 cohomology of the orbit space  $X/G$ is $\mathbb{RP}^1\times\mathbb{RP}^n\times \mathbb{CP}^m $ or $\mathbb{RP}^2\times\mathbb{CP}^n\times \mathbb{HP}^m$. This gives a partial converse of results proved in \cite{Mahender2010}.

All spaces are assumed to finitistic connected spaces. Recall that a paracompact
Hausdorff space $X$ is said to be finitistic if every open cover of $X$ has a finite dimensional open refinement.
All compact spaces and all finite dimensional paracompact spaces are finitistic spaces. We have considered \v{C}ech cohomology with coefficients in $\Z_2$.
  Throughout the paper, $X\sim_2 Y$ means $H^*(X;\Z_2)\cong H^*(Y;\Z_2)$. 
We have proved the following results:
\begin{theorem}\label{theorem 0}
	Let $G=\Z_2$  act freely on a finitistic connected space $X$ with $X/G\sim_{\Z_2}\mathbb{RP}^1\times\mathbb{CP}^n$. Then  either  $X\sim_{\Z_2} \mathbb{S}^1\times\mathbb{CP}^n$ or $X\sim_{\Z_2}\mathbb{RP}^{2n+1}$.
\end{theorem}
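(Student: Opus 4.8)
The plan is to analyze the covering $\pi\colon X\to X/G$ directly via its Gysin sequence, recovering $H^*(X;\Z_2)$ first additively and then as a ring. I would begin by noting that, since $X$ is connected and $G=\Z_2$ acts freely, $\pi$ is a connected double cover, and the associated principal $\Z_2$‑bundle is classified by a map $X/G\to B\Z_2=\RP^\infty$; the pullback $w\in H^1(X/G;\Z_2)$ of the generator must be nonzero, for otherwise $\pi$ would be a trivial cover and $X$ disconnected. Writing $H^*(X/G;\Z_2)=H^*(\RP^1\times\CP^n;\Z_2)=\Z_2[a,b]/(a^2,b^{n+1})$ with $|a|=1$, $|b|=2$, we have $H^1(X/G;\Z_2)=\Z_2\langle a\rangle$, so necessarily $w=a$.

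Feeding $w=a$ into the Gysin sequence of $\pi$,
\[
\cdots\to H^{k-1}(X/G)\xrightarrow{\,\cup a\,}H^{k}(X/G)\xrightarrow{\,\pi^{*}\,}H^{k}(X)\xrightarrow{\,\pi_{!}\,}H^{k}(X/G)\xrightarrow{\,\cup a\,}H^{k+1}(X/G)\to\cdots,
\]
the point is to compute $\cup a$ on $\Z_2[a,b]/(a^2,b^{n+1})$: it sends $b^{j}\mapsto ab^{j}$, an isomorphism $H^{2j}\to H^{2j+1}$ for $0\le j\le n$, and $ab^{j}\mapsto 0$. The resulting short exact sequences $0\to\operatorname{coker}(\cup a)\to H^{k}(X)\to\ker(\cup a)\to 0$ then force $H^{k}(X;\Z_2)=\Z_2$ for $0\le k\le 2n+1$ and $0$ otherwise; in particular, additively $X$ already has the $\Z_2$‑cohomology of both $\RP^{2n+1}$ and $\S^{1}\times\CP^{n}$, so only the ring structure remains open.

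For the multiplicative structure I would set $v:=\pi^{*}(b)\in H^{2}(X)$; exactness shows $\pi^{*}$ is injective on $H^{2j}(X/G)$, so $v^{j}=\pi^{*}(b^{j})$ is the nonzero class of $H^{2j}(X)$ for $0\le j\le n$. Let $u$ generate $H^{1}(X)$; the same sequence shows $\pi_{!}\colon H^{1}(X)\to H^{1}(X/G)$ is an isomorphism, so $\pi_{!}(u)=a$, and the projection formula gives $\pi_{!}(u\,v^{j})=\pi_{!}\!\big(u\cdot\pi^{*}(b^{j})\big)=\pi_{!}(u)\,b^{j}=ab^{j}\neq 0$ for $0\le j\le n$; hence $uv^{j}$ is the nonzero class of $H^{2j+1}(X)$. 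Therefore $H^{*}(X;\Z_2)$ is generated by $u$ and $v$ with $v^{n+1}=0$, and the only undetermined product is $u^{2}\in H^{2}(X)=\{0,v\}$. If $u^{2}=0$, then $H^{*}(X;\Z_2)\cong\Z_2[u,v]/(u^{2},v^{n+1})\cong H^{*}(\S^{1}\times\CP^{n};\Z_2)$; if $u^{2}=v$, then $u^{2j}=v^{j}$ and $u^{2j+1}=uv^{j}$ are nonzero for $j\le n$ while $u^{2n+2}=v^{n+1}=0$, so $H^{*}(X;\Z_2)\cong\Z_2[u]/(u^{2n+2})\cong H^{*}(\RP^{2n+1};\Z_2)$.

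The step I expect to be the crux is not the additive count but the claim $uv^{j}\neq 0$: without it one cannot rule out a spurious ring generator in some odd degree, and this is precisely where the transfer $\pi_{!}$ together with its module and projection-formula properties is indispensable (one should also remark that these are available over $\Z_2$ for the double cover of a finitistic space in \v{C}ech theory, and that $X/G\simeq X_{G}$ legitimizes the whole set‑up). Once those inputs are in place, the argument reduces to the finite bookkeeping of the two maps $\pi^{*}$ and $\pi_{!}$ carried out above.
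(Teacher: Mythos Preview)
Your argument is correct. The additive computation via the Gysin sequence with Euler class $w=a$ is exactly what the paper does (its $\rho$ is your $\pi_!$). The divergence is at the multiplicative step: to prove $uv^{j}\neq 0$ the paper passes to the Leray--Serre spectral sequence of the Borel fibration $X\hookrightarrow X_G\to B_G$, observes via the edge homomorphisms that $d_r(1\otimes v)=0$ while $d_2(1\otimes u)=t^2\otimes 1$, and then uses multiplicativity of $d_2$ to get $d_2(1\otimes uv^{j})=t^2\otimes v^{j}\neq 0$, whence $uv^{j}\neq 0$. You instead stay inside the Gysin sequence and exploit that $\pi_!$ is an $H^*(X/G)$-module map (the projection formula), which gives $\pi_!(uv^{j})=ab^{j}\neq 0$ directly. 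Your route is more elementary and self-contained---no spectral sequence needed, and no separate verification that $\pi_1(B_G)$ acts trivially on $H^*(X)$---whereas the paper's spectral-sequence template is what it reuses, with more labor, in the proofs of its later theorems. One small remark: you should state explicitly that the projection formula/transfer for \v{C}ech cohomology of a principal $\Z_2$-bundle over a finitistic base is available (you allude to this, but it is the one input external to the Gysin exact sequence itself).
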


\begin{theorem}\label{theorem 2}
	Let $G=\mathbb{Z}_2$ act freely on a finitistic connected space $X$  with $X/G\sim_{2}\mathbb{RP}^2\times \mathbb{HP}^n $. Then $X\sim_{2}\mathbb{S}^2\times \mathbb{HP}^n$ or $X\sim_{2}\mathbb{CP}^{2n+1}$.
\end{theorem}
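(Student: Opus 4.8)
The plan is to use that, since $G=\Z_2$ acts freely on the paracompact Hausdorff space $X$, the orbit map $q\colon X\to X/G$ is a two-fold covering, classified by a class $a\in H^{1}(X/G;\Z_2)$, and to run its Gysin sequence
\[
\cdots\to H^{i-1}(X/G)\xrightarrow{\,\cup a\,}H^{i}(X/G)\xrightarrow{\,q^{*}\,}H^{i}(X)\xrightarrow{\,\mu\,}H^{i}(X/G)\xrightarrow{\,\cup a\,}H^{i+1}(X/G)\to\cdots
\]
(equivalently, the Leray--Serre spectral sequence of the Borel fibration $X\hookrightarrow X_{G}\to B\Z_2$). Write $H^{*}(X/G;\Z_2)\cong\Z_2[x,y]/(x^{3},y^{n+1})$ with $\deg x=1$, $\deg y=4$, so a $\Z_2$-basis is $\{y^{j},xy^{j},x^{2}y^{j}:0\le j\le n\}$, living in degrees $4j,4j+1,4j+2$. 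The case $n=0$ is Su's theorem, so I assume $n\ge 1$.

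First I would note that $a\neq 0$: if $a=0$ the covering is trivial, so $X\cong(X/G)\sqcup(X/G)$ is disconnected, against the hypothesis; since $H^{1}(X/G;\Z_2)=\langle x\rangle$ this forces $a=x$. A direct computation then shows that $\cup x\colon H^{i}(X/G)\to H^{i+1}(X/G)$ is an isomorphism when $i\equiv 0,1\pmod 4$ and is zero when $i\equiv 2,3\pmod 4$ (the target then being $0$). Substituting into the Gysin sequence gives, for each $i$, a short exact sequence
\[
0\to\operatorname{coker}\!\big(\cup x\colon H^{i-1}(X/G)\to H^{i}(X/G)\big)\to H^{i}(X)\xrightarrow{\,\mu\,}\ker\!\big(\cup x\colon H^{i}(X/G)\to H^{i+1}(X/G)\big)\to 0 ,
\]
from which $H^{i}(X;\Z_2)\cong\Z_2$ for every even $i$ with $0\le i\le 4n+2$ and $H^{i}(X;\Z_2)=0$ otherwise. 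These sequences also show that $q^{*}\colon H^{4j}(X/G)\to H^{4j}(X)$ is an isomorphism for $0\le j\le n$ and that $\mu\colon H^{4j+2}(X)\to H^{4j+2}(X/G)=\langle x^{2}y^{j}\rangle$ is an isomorphism.

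Next I would pin down the cup products. Let $c$ generate $H^{2}(X;\Z_2)$ and put $d:=q^{*}(y)\in H^{4}(X;\Z_2)$; then $d^{j}=q^{*}(y^{j})$ generates $H^{4j}(X)$ for $0\le j\le n$ (so $d^{n+1}=0$), and $\mu(c)=x^{2}$. The projection formula $\mu\big(q^{*}(\alpha)\cup\beta\big)=\alpha\cup\mu(\beta)$ gives $\mu(c\cdot d^{j})=y^{j}\cup\mu(c)=x^{2}y^{j}\neq 0$, so $c\cdot d^{j}$ generates $H^{4j+2}(X)$ for $0\le j\le n$. Since $H^{4}(X)=\langle d\rangle$, exactly one of $c^{2}=0$ or $c^{2}=d$ holds. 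If $c^{2}=0$, the basis $\{d^{j},\,c\,d^{j}:0\le j\le n\}$ is subject only to $c^{2}=0$ and $d^{n+1}=0$, so $H^{*}(X;\Z_2)\cong\Z_2[c,d]/(c^{2},d^{n+1})\cong H^{*}(\S^{2}\times\HP^{n};\Z_2)$. If $c^{2}=d$, then $c^{2j}=d^{j}$ and $c^{2j+1}=c\,d^{j}$ are all nonzero for $0\le j\le n$ and $c^{2n+2}=d^{n+1}=0$, so $H^{*}(X;\Z_2)\cong\Z_2[c]/(c^{2n+2})\cong H^{*}(\CP^{2n+1};\Z_2)$. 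This yields the two alternatives in the statement.

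Most of the argument is bookkeeping with the Gysin sequence. The genuine obstacle is the final step: the additive structure of $H^{*}(X;\Z_2)$ is the same under either conclusion, so everything turns on the multiplicative behaviour of the degree-two class, and the key point is the projection-formula computation of the products $c\cdot d^{j}$, which reduces the problem to the dichotomy $c^{2}=0$ versus $c^{2}=d$. (One should also record that the Gysin sequence above is legitimate in the finitistic/\v{C}ech setting, which is standard since a free $\Z_2$-action on a paracompact Hausdorff space yields a genuine two-fold covering.)
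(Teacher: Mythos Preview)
Your argument is correct. The additive computation via the Gysin sequence of the double cover matches the paper's exactly (the paper writes $a,b$ for your $x,y$ and $\rho$ for your $\mu$), and both arrive at the same dichotomy $c^{2}=0$ versus $c^{2}=d$.

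Where you diverge from the paper is in establishing the multiplicative relation $c\cdot d^{j}\neq 0$. The paper switches to the Leray--Serre spectral sequence of the Borel fibration $X\hookrightarrow X_{G}\to B\Z_{2}$, observes (using that each $H^{i}(X;\Z_2)$ is at most one-dimensional, so the $\pi_{1}(B\Z_2)$-action is trivial) that the edge homomorphisms force $d_{3}(1\otimes c)=t^{3}\otimes 1$ while $d_{3}(1\otimes d)=0$, and then uses multiplicativity of $d_{3}$ to conclude $d_{3}(1\otimes c\,d^{j})=t^{3}\otimes d^{j}\neq 0$, hence $c\,d^{j}\neq 0$. You instead stay with the Gysin sequence and invoke the projection formula $\mu\big(q^{*}(\alpha)\cup\beta\big)=\alpha\cup\mu(\beta)$ for the transfer, getting $\mu(c\,d^{j})=x^{2}y^{j}\neq 0$ directly. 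Your route is shorter and avoids the Borel spectral sequence altogether; the paper's route, on the other hand, is the template they reuse in the longer proofs of Theorems~\ref{theorem 4} and~\ref{theorem 5}, where the cohomology groups are no longer one-dimensional and the spectral-sequence bookkeeping does more work. For the present theorem your projection-formula argument is the cleaner one.
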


\begin{theorem}\label{theorem 4}
	Let $G=\mathbb{Z}_2$ act freely on a finitistic connected space $X$  with $X/G\sim_2\mathbb{RP}^1\times\mathbb{RP}^n\times \mathbb{CP}^m $ and $\pi_1(B_G)$ act trivially on 
	$H^*(X)$. Then the cohomology algebra of  $X$  is one of the following: 
	\begin{enumerate}
		
		\item[(i)]  $\Z_2[x,y,z]/ \langle x^{n+1},y^{m+1},z^2+\alpha x^2+\beta y \rangle$,  where $\deg x=\deg z=1$, $\deg y=2$ and $\alpha,\beta\in\Z_2$;
		\item[(ii)] $\Z_2[x,y,z]/\langle x^2,y^{m+1},z^2+\alpha y^n\rangle$, where $\deg x=1,\deg y=2,\deg z=n$, $\alpha\in\Z_2$ and $\alpha=0$ if $n>m$.
	\end{enumerate}
\end{theorem}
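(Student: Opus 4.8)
The plan is to analyze the free action via its Borel fibration $X\hookrightarrow X_G\xrightarrow{\pi}B_G$, where $B_G=\RP^\infty$ and $X_G=EG\times_G X$. Since the action is free, $X_G\simeq X/G$, so
$$H^*(X_G;\Z_2)\cong H^*(\RP^1\times\RP^n\times\CP^m;\Z_2)=\Z_2[a,b,c]/\langle a^2,b^{n+1},c^{m+1}\rangle,$$
with $\deg a=\deg b=1$, $\deg c=2$, and $a,b,c$ the images of the generators of $\RP^1,\RP^n,\CP^m$. The hypothesis that $\pi_1(B_G)$ acts trivially on $H^*(X)$ makes the Leray--Serre spectral sequence of $\pi$ have $E_2^{p,q}=H^p(B_G;\Z_2)\otimes H^q(X;\Z_2)$, converging to the algebra above. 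My main tool, though, will be the Gysin sequence of the double cover $p\colon X\to X/G$,
$$\cdots\to H^{q-1}(X/G)\xrightarrow{\ \smile w\ }H^{q}(X/G)\xrightarrow{\ p^*\ }H^{q}(X)\xrightarrow{\ \tau\ }H^{q}(X/G)\xrightarrow{\ \smile w\ }H^{q+1}(X/G)\to\cdots,$$
where $w\in H^1(X/G;\Z_2)$ is the characteristic class of the cover (equivalently $w=\pi^*t$ for $t$ the generator of $H^1(B_G;\Z_2)$) and $\tau$ is the transfer; recall $\tau$ is linear over $H^*(X/G)$ through $p^*$, that $\im p^*=\ker\tau$, and that $\tau$ commutes with the Steenrod squares. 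In particular the Gysin sequence already shows $H^*(X;\Z_2)$ is finite-dimensional.

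Since $X$ is connected the cover is non-trivial, so $w\neq0$; as $H^1(X/G;\Z_2)=\Z_2 a\oplus\Z_2 b$, there are exactly three cases, $w=a$, $w=b$, $w=a+b$, and I would treat each. First I would compute $\ker(\smile w)$ and $\operatorname{coker}(\smile w)$ on $H^*(X/G)$. In each case $\operatorname{coker}(\smile w)$ is, as a ring, $R:=\im p^*\cong H^*(X/G)/wH^*(X/G)$, namely $\Z_2[x,y]/\langle x^{n+1},y^{m+1}\rangle$ when $w=a$ (with $x=p^*b$, $y=p^*c$) and $\Z_2[x,y]/\langle x^{2},y^{m+1}\rangle$ when $w=b$ or $w=a+b$ (with $x=p^*a$, $y=p^*c$); and a short computation shows $\ker(\smile w)$ is a \emph{free} rank-one $R$-module, generated by $a\in H^1(X/G)$ when $w=a$, and by $b^n\in H^n(X/G)$, respectively by $b^n+ab^{n-1}\in H^n(X/G)$, when $w=b$, respectively $w=a+b$. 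Feeding this into the Gysin sequence produces a short exact sequence of $R$-modules $0\to R\to H^*(X)\to R[d]\to 0$ with $d\in\{1,n\}$; as $R[d]$ is $R$-free the sequence splits, so $H^*(X)\cong R\oplus zR$ for some $z\in H^{d}(X)$ with $\tau(z)$ the chosen generator of $\ker(\smile w)$. Hence $H^*(X)$ is generated as a ring by $x,y,z$, the relations $x^{n+1}=0$ (resp.\ $x^2=0$) and $y^{m+1}=0$ being inherited from $R$, and only the value of $z^2$ remains to be pinned down.

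To identify $z^2\in H^{2d}(X)$ I would use $z^2=Sq^{d}(z)$, the fact that $\tau$ commutes with $Sq^{d}$, and a Cartan-formula computation --- $Sq^{1}(a)=a^2=0$, $Sq^{n}(b^n)=b^{2n}=0$, $Sq^{n}(b^n+ab^{n-1})=0$ --- to get $\tau(z^2)=Sq^{d}(\tau z)=0$, so $z^2\in\ker\tau=\im p^*=R$. Because $H^*(X)=R\oplus zR$, the membership $z^2\in R$ forces $z^2\in R^{2d}$, with no component in the $zR$-summand. Reading off that graded piece: for $w=a$, $R^{2}=\langle x^2,y\rangle$, so $z^2=\alpha x^2+\beta y$ with $\alpha,\beta\in\Z_2$, giving case (i); for $w=b$ or $w=a+b$, $R^{2n}=\langle y^n\rangle$ when $n\le m$ and $R^{2n}=0$ when $n>m$, so $z^2=\alpha y^n$ with $\alpha\in\Z_2$ and $\alpha=0$ if $n>m$, giving case (ii). Finally, the algebra $A$ so presented is free of rank two over $\Z_2[x,y]/\langle x^{n+1}\text{ or }x^2,\ y^{m+1}\rangle$, so $\dim_{\Z_2}A=\dim_{\Z_2}H^*(X)$; together with the evident surjection $A\twoheadrightarrow H^*(X)$ this shows $H^*(X)\cong A$, so there are no further relations. (This is also consistent with the Leray--Serre spectral sequence, whose differentials must cut $E_2$ down to $H^*(X_G)$; the resulting pattern is transparent except when $w=a+b$ and $n$ is even, where it is a little more involved.)

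The crux is this last step --- identifying $z^2$, and in particular excluding a ``cross term'' $zr'$ with $r'\in R^{d}$, which would give an algebra $\Z_2[x,y,z]/\langle\cdots,\,z^2+zr'+\cdots\rangle$ not on the stated list. What rules this out is the interplay of two facts: the Steenrod-square computation places $z^2$ inside $R=\im p^*$, and the $R$-module splitting $H^*(X)=R\oplus zR$ then forces the $zR$-part of $z^2$ to vanish. Getting these to fit together, and separately handling the degenerate ranges ($m=0$, small $n$, and the parity of $n$ when $w=a+b$, each of which affects which graded pieces of $R$ are nonzero), is where the real work is; the rest is routine diagram-chasing with the Gysin sequence.
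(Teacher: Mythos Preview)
Your argument is correct and follows a cleaner route than the paper's. Both proofs open the same way---split into three cases according to the Euler class $w$ of the double cover and run the Gysin sequence---but then diverge. The paper computes each $H^i(X)$ explicitly, degree by degree, writing down bases of the form $\{x^{2j}y^{i-j},\,u^{2l+1}_{2i}\}$, and then invokes the Leray--Serre spectral sequence of the Borel fibration, arguing by a lengthy induction on degree (separately for $n$ even and $n$ odd, and across three ranges of $i$) that the $d_2$-differentials force each auxiliary class $u^{\bullet}_{\bullet}$ into the span of products $x^ry^sz$; the relation on $z^2$ is finally read off from $d_2(1\otimes z^2)=0$. You replace all of this with two structural observations: the Gysin data packages into a split short exact sequence of $R$-modules $0\to R\to H^*(X)\to R[d]\to 0$ (freeness of $\ker(\smile w)$ over $R=H^*(X/G)/(w)$ being a one-line check in each case), and the identity $\tau(z^2)=Sq^{d}(\tau z)=(\tau z)^2=0$ places $z^2$ in $\ker\tau=\im p^*=R$. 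Your approach is uniform in the Euler class and in the parity of $n$, avoids the case-by-case basis bookkeeping entirely, and---worth noting---never actually uses the spectral sequence, so the hypothesis that $\pi_1(B_G)$ act trivially on $H^*(X)$ is not invoked anywhere; your argument in fact establishes the theorem without it.
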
   
\begin{theorem}\label{theorem 5}
	Let $G=\mathbb{Z}_2$ act freely on a finitistic connected space $X$  with $X/G\sim_2\mathbb{RP}^2\times  \mathbb{CP}^n\times\mathbb{HP}^m$ and $\pi_1(B_G)$ acts trivially on 
	$H^*(X)$. Then the cohomology algebra of  $X$  is isomorphic to $$ \Z_2[x,y,z]/\langle x^{n+1},y^{m+1},z^2+\alpha x^2+\beta y\rangle,$$ where $\deg x=2,\deg y=4,\deg z=2$ and $\alpha,\beta \in \Z_2$.
\end{theorem}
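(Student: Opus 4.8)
The plan is to compare the $\Z_2$-cohomology of $X$ with that of $Y:=X/G$ by means of the double cover $\pi\colon X\to Y$, whose Gysin sequence is the main engine; alternatively one may use the Borel fibration $X\hookrightarrow X_G\to B_G$, where $X_G=EG\times_G X\simeq Y$ (the action being free) and $B_G\simeq\RP^\infty$. By hypothesis $H^*(Y;\Z_2)\cong\Z_2[a,b,c]/\langle a^3,b^{n+1},c^{m+1}\rangle$ with $\deg a=1$, $\deg b=2$, $\deg c=4$. Since $X$ is connected the cover is nontrivial, hence classified by the unique nonzero class $a\in H^1(Y;\Z_2)$, and its Gysin sequence reads
\[
\cdots\to H^{i}(Y)\xrightarrow{\ \cup a\ }H^{i+1}(Y)\xrightarrow{\ \pi^*\ }H^{i+1}(X)\xrightarrow{\ \operatorname{tr}\ }H^{i+1}(Y)\xrightarrow{\ \cup a\ }\cdots,
\]
where $\operatorname{tr}$ denotes the transfer.

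Writing $H^*(Y)=\Z_2[a]/(a^3)\otimes A$ with $A=\Z_2[b,c]/\langle b^{n+1},c^{m+1}\rangle$, multiplication by $a$ has kernel the ideal $(a^{2})$ — which in degree $i$ is $a^{2}A^{i-2}$ — and cokernel $H^*(Y)/(a)\cong A$. Hence the Gysin sequence breaks into short exact sequences $0\to A^{i}\xrightarrow{\pi^*}H^{i}(X)\xrightarrow{\operatorname{tr}}(a^{2})^{i}\to 0$; in particular $H^{\mathrm{odd}}(X)=0$ and $H^*(X)$ is finite dimensional. As $\pi^*$ is a ring homomorphism with $\pi^*(a)=0$ and is injective on $A$ (because $A\cap(a)=0$ in $H^*(Y)$), its image is the subalgebra $\Z_2[x,y]/\langle x^{n+1},y^{m+1}\rangle$ with $x=\pi^*b$ ($\deg x=2$) and $y=\pi^*c$ ($\deg y=4$). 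Now choose $z\in H^2(X)$ with $\operatorname{tr}(z)$ a generator of $(a^{2})^{2}=\Z_2a^{2}$. Using $\operatorname{tr}\circ\pi^*=0$ and the projection formula $\operatorname{tr}\!\big(\pi^*(\xi)\,w\big)=\xi\,\operatorname{tr}(w)$, the family $\{x^{i}y^{j}\}$ is a basis of $\im\pi^*$ while $\{x^{i}y^{j}z\}$ maps under $\operatorname{tr}$ to a basis of $(a^{2})$; therefore $H^*(X)$ is a free module over $\Z_2[x,y]/\langle x^{n+1},y^{m+1}\rangle$ on $\{1,z\}$, with $\deg z=2$. Expressing the degree-$4$ class $z^{2}$ in this basis gives $z^{2}=\alpha x^{2}+\beta y+\gamma xz$ for some $\alpha,\beta,\gamma\in\Z_2$.

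The step I expect to be the real obstacle is to show that the cross term vanishes, i.e.\ $\gamma=0$ — note that the substitution $z\mapsto z+\lambda x$ does not change $\gamma$. Here the key idea is that the transfer commutes with Steenrod squares: since $z\in H^{2}$ one has $z^{2}=\mathrm{Sq}^{2}z$, hence $\operatorname{tr}(z^{2})=\mathrm{Sq}^{2}\!\big(\operatorname{tr}(z)\big)=\mathrm{Sq}^{2}(a^{2})=a^{4}=0$, the last equality because $a^{3}=0$. On the other hand $z^{2}=\pi^*(\alpha b^{2}+\beta c)+\gamma\,\pi^*(b)\,z$, so by $\operatorname{tr}\circ\pi^*=0$ and the projection formula $\operatorname{tr}(z^{2})=\gamma\,b\,\operatorname{tr}(z)=\gamma\,a^{2}b$. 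Thus $\gamma\,a^{2}b=0$ in $H^*(Y)$; since $a^{2}b\neq 0$ whenever $n\geq1$, we conclude $\gamma=0$, and consequently $H^*(X;\Z_2)\cong\Z_2[x,y,z]/\langle x^{n+1},y^{m+1},z^{2}+\alpha x^{2}+\beta y\rangle$, which is the assertion.

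It remains to dispose of the degenerate cases. If $n=0$ this is precisely the hypothesis of Theorem~\ref{theorem 2} (with the $\HP^{m}$-class playing the role of $y$), so the conclusion is contained there; the cases $m=0$ and $n=m=0$ go through by the identical argument, the only differences being that some basis monomials vanish and that a genuine cross term can occur only when $n\geq1$. As a cross-check, the same result follows from the Leray--Serre spectral sequence of $X\hookrightarrow X_G\to B_G$: the assumption that $\pi_1(B_G)$ acts trivially on $H^*(X)$ makes $E_2^{p,q}=\Z_2[t]\otimes H^{q}(X)$ with $\deg t=1$; the class $t$ is a permanent cocycle which is never a boundary, hence survives and must map to $a$ under the edge homomorphism, forcing $t^{3}$ to be hit — necessarily by $d_3$ from a class $z\in H^2(X)$ with $d_3(z)=t^{3}$ — and a count of dimensions then gives $E_4=E_\infty$, recovering the algebra above.
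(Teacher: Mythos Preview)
Your proof is correct, and it shares the Gysin-sequence backbone with the paper's argument, but the execution is genuinely different in two places.

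First, the paper computes $H^*(X)$ by an explicit degree-by-degree analysis: it separates the ranges $0\le i<n/2$, $n/2\le i\le m$, $m<i\le m+n/2$ (and further splits into $n$ even and $n$ odd), writing down bases for $\ker p^*$, $\im p^*$, $\im\rho$ in each degree. You bypass all of this by writing $H^*(Y)=\Z_2[a]/(a^3)\otimes A$ and reading off $\ker(\cup a)=a^2A$, $\operatorname{coker}(\cup a)=A$ uniformly, then using the projection formula to see that $\{x^iy^j,\,x^iy^jz\}$ is a basis. This is much cleaner and costs nothing.

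Second, and more interestingly, the two proofs kill the cross term $\gamma xz$ in $z^2$ by different mechanisms. The paper argues via the Leray--Serre spectral sequence of $X\hookrightarrow X_G\to B_G$: since $d_3(1\otimes z)=t^3\otimes 1$, the derivation property in characteristic~$2$ gives $d_3(1\otimes z^2)=0$, so $z^2$ is a permanent cocycle in $E_*^{0,4}$ and therefore lies in $\im p^*=\Z_2\langle x^2,y\rangle$. Your route---$\operatorname{tr}(z^2)=\mathrm{Sq}^2\operatorname{tr}(z)=\mathrm{Sq}^2(a^2)=a^4=0$, while the projection formula gives $\operatorname{tr}(z^2)=\gamma\,a^2b$---is equally valid and arguably more transparent. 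The trade-off is that the paper's argument stays entirely within the spectral-sequence machinery it has already set up and needs no Steenrod operations, whereas yours imports one extra standard fact (transfer commutes with $\mathrm{Sq}^i$) in exchange for a one-line computation.

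Your handling of the degenerate cases $n=0$ (where $x=0$ so no cross term can appear, and the statement reduces to Theorem~\ref{theorem 2}) and $m=0$ is fine.
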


\section{Preliminaries} 
In this section, we have recalled some results and definitions which are used to prove our main results.

Let $G$ be a compact Lie group and $G\hookrightarrow E_G\to B_G$ be the universal principal $G$-bundle, where $B_G$ is the classifying space. Suppose $G$  acts freely on a finitistic space $X$. The associated bundle $X\hookrightarrow (X\times E_G)/{G}\to B_G$ is a fibre bundle with fibre $X$. Put $X_G=(X\times E_G)/{G}$. Then the bundle $X\hookrightarrow X_G\to B_G$ is called the Borel fibration. We consider the Leray-Serre spectral sequence for the Borel fibration. If $\pi_1(B_G)$ acts trivially on $H^*(X)$, then  the system of local coefficients on $B_G$ is simple and the $E_2$-term of the Leray-Serre spectral sequence corresponding to the Borel fibration is given by \begin{equation*}
	E_2^{k,l}= H^k(B_G)\otimes H^l(X).
\end{equation*}
This sequence converges to $H^*(X_G)$ as an algebra.
We recall some results which are needed to prove our results:
\begin{proposition}\label{5p2}
	Let $X\stackrel{i}{\hookrightarrow} X_G \stackrel{\pi}{\longrightarrow} B_G$ be the Borel fibration. Suppose that the system of local coefficients on $B_G$ is simple, then the edge homomorphisms
	{\setlength\arraycolsep{35pt}
		\begin{align*}
		 H^k(B_G)\cong E_2^{k,0} \longrightarrow E_3^{k,0}\longrightarrow \cdots  
		   \longrightarrow E_k^{k,0} \longrightarrow E_{k+1}^{k,0}=E_{\infty}^{k,0}\subset H^k(X_G)
	\end{align*}}
	and  $$H^l(X_G) \longrightarrow E_{\infty}^{0,l}= E_{l+2}^{0,l} \subset E_{l+1}^{0,l} \subset \cdots \subset E_2^{0,l}\cong  H^l(X)$$
	are the homomorphisms $$\pi^*_k: H^k(B_G) \to H^k(X_G) ~ ~ ~ \textrm{and} ~ ~ ~ i^*_l: H^l(X_G)  \to H^l(X).$$
\end{proposition}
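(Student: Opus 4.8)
The plan is to prove this standard identification of the edge homomorphisms with $\pi^*$ and $i^*$ by unwinding the defining filtration of the spectral sequence and then invoking naturality under maps of fibrations. Recall that, after choosing a CW structure on $B_G$ (or a CW approximation — harmless even when $B_G$ is infinite dimensional), the Leray--Serre spectral sequence is the one associated to the decreasing filtration of $X_G$ by the subspaces $X_G^{(p)}:=\pi^{-1}(B_G^{(p)})$, where $B_G^{(p)}$ is the $p$-skeleton; the induced filtration on the abutment is $F^pH^n(X_G)=\ker\bigl(H^n(X_G)\to H^n(X_G^{(p-1)})\bigr)$, and $E_\infty^{p,q}\cong F^pH^{p+q}(X_G)/F^{p+1}H^{p+q}(X_G)$. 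Since the local system is simple and $X$ is connected, $E_2^{k,0}=H^k(B_G;H^0(X))\cong H^k(B_G)$ via base-pullback and $E_2^{0,l}=H^0(B_G;H^l(X))\cong H^l(X)$ via restriction to the fibre over a $0$-cell.

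For the base row, I would argue as follows. For degree reasons $F^{k+1}H^k(X_G)=0$, so $E_\infty^{k,0}=F^kH^k(X_G)$ is a submodule of $H^k(X_G)$; moreover each map $E_r^{k,0}\to E_{r+1}^{k,0}$ is surjective, because at position $(k,0)$ the only differential with possibly non-trivial source or target at stage $r\ge 2$ is the incoming $d_r\colon E_r^{k-r,r-1}\to E_r^{k,0}$, and this chain of quotients stabilises at $r=k+1$. To identify the composite $H^k(B_G)\to H^k(X_G)$ with $\pi^*$, I would apply functoriality of the spectral sequence to the morphism of fibrations $(\pi,\mathrm{id})$ from $X\hookrightarrow X_G\xrightarrow{\pi}B_G$ to the trivial fibration $\mathrm{pt}\hookrightarrow B_G\xrightarrow{\mathrm{id}}B_G$: the latter spectral sequence is concentrated on the base row, collapses at $E_2$ with $E_\infty^{k,0}$ equal to the abutment $H^k(B_G)$, and its bottom edge homomorphism is the identity, while the induced morphism of spectral sequences is the identity on $E_2^{k,0}$ and equals $\pi^*$ on abutments; naturality of the edge homomorphisms then forces the claim. (Alternatively, one checks directly that $\pi^*$ factors through $H^k(X_G,X_G^{(k-1)})$, which is exactly the source of the surjection onto $E_\infty^{k,0}$ dictated by the filtration.)

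For the left column the argument is dual. Here $E_\infty^{0,l}=H^l(X_G)/F^1H^l(X_G)$ is a quotient of $H^l(X_G)$, and each inclusion $E_{r+1}^{0,l}\hookrightarrow E_r^{0,l}$ is injective because no differential maps into position $(0,l)$, the chain stabilising at $r=l+2$. Taking the $0$-skeleton of $B_G$ to be the basepoint $b_0$, so that $X=\pi^{-1}(b_0)$ and $i\colon X\hookrightarrow X_G$ is the fibre inclusion, one has $F^1H^l(X_G)=\ker\bigl(i^*\colon H^l(X_G)\to H^l(X)\bigr)$; hence $E_\infty^{0,l}=\im(i^*)$ and its inclusion into $E_2^{0,l}\cong H^l(X)$ is precisely the inclusion of $\im(i^*)$, so the left edge homomorphism is $i^*$. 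Equivalently one invokes naturality for the morphism of fibrations $\bigl(X\xrightarrow{\mathrm{id}}X\to\mathrm{pt}\bigr)\to\bigl(X\to X_G\xrightarrow{\pi}B_G\bigr)$ covering $\{b_0\}\hookrightarrow B_G$, whose source spectral sequence collapses in column $0$ with identity edge map and $i^*$ on abutments.

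The hard part will not be conceptual — it is essentially bookkeeping with the filtration — but two points do need care. First, the identifications $E_2^{0,*}\cong H^*(X)$ and $E_2^{*,0}\cong H^*(B_G)$ must be set up as the honest fibre-restriction and base-pullback maps (this is exactly where the hypothesis of a simple local coefficient system is used), so that they are compatible with the maps of fibrations above. Second, one must verify that $E_\infty^{k,0}$ and $E_\infty^{0,l}$ genuinely occur at the two extreme ends of the filtration of the abutment, so that they may be read off literally as a submodule, respectively a quotient, of $H^*(X_G)$ with no extension ambiguity intervening. Since $B_G$ is typically infinite dimensional, one works with the Leray--Serre spectral sequence over an arbitrary CW base (or its singular version), but convergence and the edge identifications are unaffected.
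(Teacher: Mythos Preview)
Your argument is correct and is precisely the standard proof one finds in the literature: identify the two extreme pieces of the filtration on the abutment, and then use naturality with respect to the morphisms of fibrations $(\pi,\mathrm{id})$ and $(i,\mathrm{const})$ to recognise the edge maps as $\pi^*$ and $i^*$. The two caveats you flag --- that the $E_2$-identifications must be the honest pullback/restriction maps (which is where simplicity of the local system enters) and that the extreme $E_\infty$-terms sit at the ends of the filtration with no extension ambiguity --- are exactly the points that need checking, and you have addressed them.

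As for comparison with the paper: there is nothing to compare. The paper does not prove this proposition at all; it is stated in the Preliminaries section as a known fact, with a blanket reference to McCleary's \emph{A User's Guide to Spectral Sequences} for details about spectral sequences. Your write-up therefore goes beyond what the paper offers, and is in fact a faithful outline of the proof one would find in that reference (or in any standard treatment of the Leray--Serre spectral sequence).
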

For details about spectral sequences, we refer \cite{McCleary}. Let $G$ act freely on $X$ and  $h:X_G\rightarrow X/G$ be the map induced by $G$-equivariant projection $X\times E_G\rightarrow X$.  Then $h$ is a homotopy equivalence \cite{Dieck}.  Now, we recall Gysin sequence of sphere bundles.
\begin{proposition}[\cite{Hatcher}]
	Let  $G=\Z_2$ act freely on a finitistic space $X$. The Gysin sequence of the sphere bundle $G\hookrightarrow X\stackrel{p}{\to} X/G$ is: \begin{align*}
	\cdots\longrightarrow H^{i-1}(X/G) \stackrel{\cup}{\longrightarrow} H^i(X/G)\stackrel{p^*_i}{\longrightarrow} H^i(X)\stackrel{\rho_i }{\longrightarrow}H^i(X/G)\stackrel{\cup}{\longrightarrow}H^{i+1}(X/G)\longrightarrow\cdots
	\end{align*}
	which  start with 
	\begin{align*}
		0\longrightarrow &H^{0}(X/G)\stackrel{p^*_0}{\longrightarrow}H^{0}(X)\stackrel{\rho_{0}}{\longrightarrow}H^0(X/G)\stackrel{\cup}{\longrightarrow}H^{1}(X/G)\stackrel{p^*_{1}}{\longrightarrow}H^{1}(X)\longrightarrow\cdots 	\end{align*}
 	where $\cup:H^{i}(X/G)\to H^{i+1}(X/G)$  maps $x\to xe$, $e\in H^{1}(X/G)$ denotes the image of $1$ under the map $\cup:H^0(X/G)\to H^1(X/G)$ and $e$ is called the Euler class of the   sphere  bundle. 
\end{proposition}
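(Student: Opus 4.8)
By the Künneth formula, the hypothesis gives $H^*(X/G)\cong \Z_2[a,b,c]/\langle a^3,b^{n+1},c^{m+1}\rangle$ with $\deg a=1$, $\deg b=2$, $\deg c=4$. The plan is to read off the additive structure and a generating set of $H^*(X)$ from the Gysin sequence of the double cover $\S^0\hookrightarrow X\stackrel{p}{\to}X/G$, and then to pin down the single remaining multiplicative relation using the Leray--Serre spectral sequence of the Borel fibration. I would begin by identifying the Euler class $e\in H^1(X/G)=\langle a\rangle$: either $e=0$ or $e=a$, but $e=0$ would make the double cover trivial and $X$ disconnected, contrary to hypothesis, so $e=a$ and the connecting map $\cup e$ of the Gysin sequence is multiplication by $a$. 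In $\Z_2[a,b,c]/\langle a^3,\dots\rangle$ this map annihilates exactly the monomials divisible by $a^2$ and is injective on the rest, so $\ker(\cup a)$ is spanned by $\{a^2b^jc^k\}$ and $\operatorname{coker}(\cup a)$ by $\{b^jc^k\}$.

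Feeding this into the short exact sequences $0\to\operatorname{coker}(\cup a)\stackrel{p^*}{\to}H^*(X)\stackrel{\rho}{\to}\ker(\cup a)\to 0$ extracted from the Gysin sequence, I would obtain degree by degree an additive basis $\{x^jy^k,\ x^jy^kz\}$ of $H^*(X)$, where $x=p^*(b)$ and $y=p^*(c)$ are the (nonzero) images of the coker generators, and $z\in H^2(X)$ is any class with $\rho(z)=a^2$; using the projection formula $\rho(p^*(u)\,v)=u\,\rho(v)$ one checks $x^jy^kz\mapsto a^2b^jc^k$, so these do form a basis. Since $p^*$ is a ring homomorphism, $x^{n+1}=p^*(b^{n+1})=0$ and $y^{m+1}=p^*(c^{m+1})=0$, and the resulting Poincaré series $(1+t^2)\bigl(\sum_{j=0}^n t^{2j}\bigr)\bigl(\sum_{k=0}^m t^{4k}\bigr)$ already agrees with that of the claimed algebra. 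In particular $H^1(X)=0$. The only datum left undetermined by the Gysin sequence is $z^2\in H^4(X)=\langle x^2,y,xz\rangle$, i.e.\ whether in $z^2=\alpha x^2+\beta y+\gamma\,xz$ one has $\gamma=0$.

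This last point is the crux, and the Gysin sequence cannot settle it because $\rho$ is not multiplicative; here I would pass to the Leray--Serre spectral sequence of $X\hookrightarrow X_G\to B_G$, which converges to $H^*(X_G)\cong H^*(X/G)$ and, by the triviality hypothesis on $\pi_1(B_G)$, has $E_2^{k,l}=\Z_2[t]\otimes H^*(X)$ with $\deg t=1$. Under the edge homomorphisms of Proposition \ref{5p2} one gets $a=\pi^*(t)$, while $x=i^*(b)$ and $y=i^*(c)$ lie in the image of the fibre restriction $i^*=p^*$, hence are permanent cycles. Since $H^1(X)=0$, the relation $a^3=0$ forces $t^3\in E_3^{3,0}$ to be killed, and the only source is $E_3^{0,2}=\langle x,z\rangle$; as $d_3(x)=0$, this pins down the transgression $d_3(z)=t^3$. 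The derivation property then gives $d_3(z^2)=2z\,d_3(z)=0$, so $z^2$ is a permanent cycle and therefore lies in $\operatorname{im}(p^*)\cap H^4(X)=\langle x^2,y\rangle$; that is, $\gamma=0$ and $z^2=\alpha x^2+\beta y$ for some $\alpha,\beta\in\Z_2$. Finally, comparing Poincaré series shows the evident surjection $\Z_2[x,y,z]/\langle x^{n+1},y^{m+1},z^2+\alpha x^2+\beta y\rangle\to H^*(X)$ is an isomorphism, which is the assertion. I expect the genuine obstacle to be exactly the vanishing of the $xz$-coefficient: it is invisible to the Gysin sequence and seems to require the transgression computation above.
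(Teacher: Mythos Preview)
Your write-up does not address the stated proposition at all: that proposition merely records the form of the Gysin sequence for a free $\Z_2$-action and is cited from \cite{Hatcher} without proof in the paper. What you have actually written is a proof of Theorem~\ref{theorem 5} (the case $X/G\sim_2\mathbb{RP}^2\times\mathbb{CP}^n\times\mathbb{HP}^m$), so I compare it against the paper's argument for that result.

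Read as a proof of Theorem~\ref{theorem 5}, your argument is correct and follows the paper's two-step architecture: determine the Euler class, extract the additive structure of $H^*(X)$ from the Gysin sequence, then use the Borel spectral sequence to fix $z^2$. The difference is purely in execution. The paper carries out the Gysin step by listing explicit monomial bases of $H^{4i+r}(X/G)$ in three separate degree ranges, splitting further into the cases $n$ even and $n$ odd, and then runs an induction (borrowed from the proof of Theorem~\ref{theorem 4}) to replace the auxiliary classes $u_j^i$ by monomials $x^py^qz$. You collapse all of this into the single structural observation that multiplication by the degree-one Euler class on $\Z_2[a,b,c]/\langle a^3,\dots\rangle$ has kernel $a^2\cdot\Z_2[b,c]$ and cokernel $\Z_2[b,c]$, combined with the projection formula $\rho(p^*(u)\,v)=u\,\rho(v)$ to verify the basis $\{x^jy^k,\,x^jy^kz\}$ in one stroke; this is considerably shorter and uniform in $n$. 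On the final relation you are also more explicit than the paper: the paper asserts $z^2=\alpha x^2+\beta y$ immediately after noting one vanishing differential, whereas you isolate the a priori $xz$-coefficient $\gamma$ and eliminate it by observing that $d_3(z^2)=2z\,d_3(z)=0$ (with higher differentials on $z^2$ landing in zero groups once $t^3$ is killed), so that $z^2$ is a permanent cycle and hence lies in $\operatorname{im}(p^*)\cap H^4(X)=\langle x^2,y\rangle$.
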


\begin{proposition}[\cite{Pergher}]\label{H^j(X/G)=0}
	Let  $G=\mathbb{Z}_2$ act freely on a finitistic space $X$. Suppose that $H^j(X)=0$ for all $j>n$, then $H^j(X/G)=0$ for all $j>n$.
\end{proposition}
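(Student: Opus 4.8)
The engine of the proof is the Gysin sequence of the free $\Z_2$-action, viewed as the sphere ($S^0$) bundle $G\hookrightarrow X\xrightarrow{p}X/G$, together with its Euler class $e\in H^1(X/G)$. The plan is to read off the effect of the hypothesis $H^j(X)=0$ for $j>n$ on the multiplication map $\cup e$, and then to terminate the resulting chain of isomorphisms using the finitistic hypothesis. First I would isolate, for each $i$, the segment
\begin{align*}
H^i(X)\xrightarrow{\rho_i}H^i(X/G)\xrightarrow{\cup e}H^{i+1}(X/G)\xrightarrow{p^*_{i+1}}H^{i+1}(X)
\end{align*}
of the Gysin sequence. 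Exactness at the first $H^i(X/G)$ gives $\ker(\cup e)=\operatorname{im}\rho_i$, so $H^i(X)=0$ forces $\cup e\colon H^i(X/G)\to H^{i+1}(X/G)$ to be injective; exactness at $H^{i+1}(X/G)$ gives $\operatorname{im}(\cup e)=\ker p^*_{i+1}$, so $H^{i+1}(X)=0$ forces the same map to be surjective. Hence for every $i>n$ the map $\cup e$ is an isomorphism $H^i(X/G)\xrightarrow{\cong}H^{i+1}(X/G)$.

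Consequently all the groups $H^i(X/G)$ with $i\geq n+1$ are identified, via successive multiplication by $e$, with a single group $V:=H^{n+1}(X/G)$. If $V\neq 0$, I would choose $0\neq v\in V$; since each $\cup e$ in degrees $>n$ is injective, $e^kv\neq 0$ for all $k\geq 0$, so $H^{n+1+k}(X/G)\neq 0$ for every $k$ and $e$ acts as a non-nilpotent operator on $\bigoplus_{j\geq n+1}H^{j}(X/G)$. The remaining task is to contradict this using finiteness.

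For the termination I would use that, $G$ being a compact Lie group acting freely on the finitistic space $X$, the orbit space $X/G$ is again finitistic, and that $h\colon X_G\to X/G$ is a homotopy equivalence, so $H^*(X/G)\cong H^*_G(X)$ as a module over $H^*(B_G)=\Z_2[t]$ with $t$ acting as $\cup e$. Because the action is free, $X^{G}=\emptyset$, and the localization theorem (valid in the finitistic category under the present finiteness of $H^*(X)$) shows that $H^*_G(X)$ is $t$-power torsion: every class is annihilated by some power of $e$. Applying this to $v\in V$ contradicts $e^kv\neq 0$ for all $k$, forcing $V=0$ and hence $H^j(X/G)=0$ for all $j>n$.

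The main obstacle is precisely this last step: making the termination rigorous inside the finitistic category. The chain of Gysin isomorphisms alone does not force vanishing, since for $\RP^\infty=B_{\Z_2}$ the analogous class $e$ is not nilpotent, so the finitistic hypothesis must be used in an essential way. The cleanest route is the localization statement above; alternatively one can argue directly that non-nilpotence of $e$ would produce a classifying map $X/G\to\RP^\infty$ for the double cover that does not factor through any finite $\RP^N$, which is incompatible with the cohomological finiteness that the finitistic hypothesis imposes on $X/G$. Either way, verifying that the relevant localization or finite-dimensionality input genuinely holds for a finitistic $X$ with $H^{j}(X)=0$ for $j>n$ is the crux, and is where I would expect to spend most of the effort.
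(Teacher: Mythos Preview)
The paper does not give its own proof of this proposition; it is quoted from \cite{Pergher} as a known result and used as a black box. Your outline is correct and is essentially the standard argument one finds in that literature. The Gysin step showing that $\cup e\colon H^i(X/G)\to H^{i+1}(X/G)$ is an isomorphism for every $i>n$ is exactly right, and your identification of the termination as the only nontrivial point is accurate: the $\RP^\infty$ example shows that the chain of isomorphisms alone cannot close the argument. The localization theorem is indeed the correct tool here, and one small simplification is that for $G=\Z_2$ acting on a finitistic space the \v{C}ech--cohomological localization theorem holds with no additional finiteness hypothesis on $H^*(X)$; the finitistic assumption alone guarantees that freeness of the action forces $H^*_G(X)\cong H^*(X/G)$ to be $e$-power torsion, which combined with your Gysin isomorphisms gives $H^{n+1}(X/G)=0$ and hence the conclusion.
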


\section{Proof of main Theorems} 
\begin{proof}[Proof of Theorem \ref{theorem 0}]	
	As $X/G\sim_2 \mathbb{RP}^1\times\mathbb{CP}^n $, we have $H^*(X/G)=\Z_2[a,b]/\langle a^2,b^{n+1}\rangle,$ where $ \deg a=1$ and $\deg b=2$. By  connectedness of $X$ and the exactness of Gysin sequence of $G$-bundle $G\hookrightarrow X\stackrel{p}{\to} X/G$, we get  
	$p^*_0$  is an isomorphism and the Euler class $\cup(1)$ is $a$. This implies that for  $0\leq i< n$, $p^*_{2i+2}:H^{2i+2}(X/G)\to H^{2i+2}(X)$,   $\rho_{2i+1}:H^{2i+1}(X)\to H^{2i+1}(X/G)$ and $\rho_{2n+1}:H^{2n+1}(X)\to H^{2n+1}(X/G)$ are isomorphisms.  Consequently, $H^{2i+1}(X)\cong H^{2n+1}(X)\cong H^{2i+2}(X)\cong\Z_2$ with bases $\{u_{2i+1}\}$, $\{u_{2n+1}\}$ and $\{x^{i+1}\}$, respectively, where $x=p^*_2(b)$, $\rho_{2i+1}(u_{2i+1})=ab^i$, $\rho_{2n+1}(u_{2n+1})=ab^n$. By Proposition \ref{H^j(X/G)=0}, $H^i(X)=0$ for $i>2n+1$. Now, we compute the cohomology algebra of $X$.  The Leray-Serre spectral sequence associated to the Borel fibration $X\hookrightarrow X_G\to B_G$  converges to $H^*(X_G)$. It is easy to see that $G$  acts trivially on $H^*(X)$. Thus, the $E_2$-term $E_2^{s,t}$ of the  Leray-Serre spectral sequence associated to the Borel fibration $X\stackrel{i}{\hookrightarrow}X_G\stackrel{\pi}{\to}B_G$ is   $H^s(B_G)\otimes H^t(X)$. By the edge homomorphism, we get $d_r(1\otimes x)=0$ for all $r> 0$ and $d_{r'}(1\otimes u_{1})\not=0$ for some $r'>0$. So, $d_2(1\otimes u_1)=t^2\otimes 1$.  This gives that $x^iu_1\not=0$, and hence $u_{2i+1}=x^iu_1$ for all $0\leq i\leq n$. Clearly, $x^{n+1}=0$. If $u_1^2=0$ then $X\sim_2\mathbb{S}^1\times \mathbb{CP}^n$.  If $u_1^2\not=0$ then $X\sim_2\mathbb{RP}^{2n+1}$.	
\end{proof}

\begin{proof}[Proof of Theorem \ref{theorem 2}]
	As $X/G\sim_2 \mathbb{RP}^2\times \mathbb{HP}^n $, we have $H^*(X/G)=\Z_2[a,b]/\langle a^3,b^{n+1}\rangle$, where $\deg a=1$ and $\deg b=4$.  It is easy to see that $\cup:H^0(X/G)\to H^1(X/G)$ maps 1 to $a$. Since $H^j(X/G)=0$ for $j\equiv 3\,(\text{mod 4})$, we get $p^*_{4i}:H^{4i}(X/G)\to H^{4i}(X)$ and $\rho_{4i+2}:H^{4i+2}(X)\to H^{4i+2}(X/G)$ are isomorphisms for $1\leq i\leq n$. Consequently, for $1\leq i\leq n$, we get  $H^{4i}(X)\cong H^{4i+2}(X)\cong\Z_2$  with bases $\{x^i\}$ and $\{u_{4i+2}\}$, respectively,  where $x=p_4^*(b)$ and $ \rho_{4i+2}(u_{4i+2})=a^2b^i$. By the exactness of Gysin sequence,  $H^{4i+3}(X)=H^{4i+1}(X)=0$ for all $0\leq i\leq n$.    Clearly, $H^{i}(X)=0$ for all $i> 4n+3$. Now, we compute cohomology algebra of $X$. As $\pi_1(B_G)$ acts trivially on $H^*(X)$, we get   $E^{s,t}_2=H^s(B_G)\otimes H^t(X)$. By the edge homomorphism, we get $d_3(1\otimes u_{2})=t^3\otimes 1$ and $d_3(1\otimes x)=0$. Thus, $x^iu_2\not=0$,  and hence $u_{4i+2}=x^iu_2$ for all $0\leq i\leq n$. Clearly, $x^{n+1}=0$. If $u_2^2=0$ then $X\sim_2\mathbb{S}^2\times \mathbb{HP}^n$. Otherwise, we get  $X\sim_2\mathbb{CP}^{2n+1}$.
\end{proof}

\begin{proof}[Proof of Theorem \ref{theorem 4}]
It is given that $X/G\sim_2\mathbb{RP}^1\times\mathbb{RP}^n\times \mathbb{CP}^m $. So, we get  $H^*(X/G)=\Z_2[a,b,c]/\langle a^{n+1},b^{m+1},c^2  \rangle$, where $\deg a=\deg c=1$ and $\deg b=2$. As $X$ is  connected,  $H^0(X)\cong \Z_2$. First, we consider $1\leq n<2m.$ By Proposition \ref{H^j(X/G)=0},  $H^i(X)=0$ for all $i>2m+n+1$.  Consider the Gysin sequence of the sphere bundle $G\hookrightarrow X\stackrel{p}{\to}X/G$:
\begin{align*}
		\cdots\longrightarrow H^{i-1}(X/G) \stackrel{\cup}{\longrightarrow} H^i(X/G)\stackrel{p^*_i}{\longrightarrow} H^i(X)\stackrel{\rho_i }{\longrightarrow}H^i(X/G)\stackrel{\cup}{\longrightarrow}H^{i+1}(X/G)\longrightarrow\cdots
\end{align*}
which begins with
\begin{align*}
	0\longrightarrow H^0(X/G)\stackrel{p^*_0}{\longrightarrow}H^0(X) \stackrel{\rho_0}{\longrightarrow}H^0(X/G)\stackrel{\cup}{\longrightarrow}H^1(X/G)
	\stackrel{p^*_1}{\longrightarrow} H^1(X)\longrightarrow\cdots
\end{align*}
 Clearly, the Euler class of the associated principal $G$-bundle $G\hookrightarrow X\to X/G$ must be nontrivial.  We have considered the following three cases:

\noindent\textbf{Case\,(1):} When $\cup:H^0(X/G)\to H^1(X/G)$ maps $1$ to $c$.

 For $0\leq i< \frac{n}{2}$, we have $H^{2i}(X/G) \cong \Z_2\oplus\cdots\oplus \Z_2$ ($2i+1$-copies) and $ H^{2i+1}(X/G)\cong \Z_2\oplus\cdots \oplus\Z_2$ ($2i+2$-copies) with bases
\begin{align*}
&\{b^i,ab^{i-1}c,a^2b^{i-1},a^3b^{i-2}c,\cdots, a^{2i-1}c,a^{2i}\}, \text{ and }\\
&\{b^ic,ab^{i},a^2b^{i-1}c,a^3b^{i-1},\cdots, a^{2i}c,a^{2i+1}\},
\end{align*} respectively.
 By the  exactness of Gysin sequence, we get  $\ker p_{2i}^*=\im \rho_{2i}
 \cong\Z_2\oplus\cdots\oplus\Z_2$ ($i$-copies) with basis $\{a^{2j+1}b^{i-j-1}c| 0\leq j\leq i-1\}$, and $\ker p_{2i+1}^*=\im \rho_{2i+1}
 \cong\Z_2\oplus\cdots\oplus\Z_2$ ($i+1$-copies) with basis $\{a^{2j}b^{i-j}c| 0\leq j\leq i\}$. Consequently, $\im p ^*_{2i}\cong \Z_2\oplus\cdots\oplus\Z_2$ ($i+1$-copies) with basis $\{x^{2j}y^{i-j}|0\leq j\leq i\}$, and $\im p ^*_{2i+1}\cong \Z_2\oplus\cdots\oplus\Z_2$ ($i+1$-copies) with basis $\{x^{2j+1}y^{i-j}|0\leq j\leq i\}$, where $p^*_1(a)=x,p^*_2(b)=y$.
 It is easy to see that 
 \begin{align*}
 	 0\to \im p^*_{j}\hookrightarrow H^{j}(X)\stackrel{\rho_{j}}{\to} \im \rho_{j}\to 0, j=2i,2i+1 
  \end{align*}
 are split exact sequences. This gives that   \begin{align*}
 	&H^{2i}(X)\cong  \Z_2\oplus\cdots\oplus\Z_2\, (2i+1\text{-copies}), \text{ and}\\
&H^{2i+1}(X)\cong  \Z_2\oplus\cdots\oplus\Z_2\,(2i+2\text{-copies})
\end{align*} with  bases $\{x^{2j}y^{i-j},u_{2i}^{2l+1}|0\leq j\leq i, 0\leq l\leq i-1\}$ and  $\{x^{2j+1}y^{i-j},u_{2i+1}^{2j}|0\leq j\leq i\}$, respectively, where  $\rho_{2i}(u_{2i}^{2l+1})=a^{2l+1}b^{i-l-1}c, 0\leq l\leq i-1\text{ and }\rho_{2i+1}(u_{2i+1}^{2j})=a^{2j}b^{i-j}c, 0\leq j\leq i$.

  Firstly, suppose that $n=2k$ for some $k$. Then for $\frac{n}{2}\leq  i\leq  m$,  $H^{2i}(X/G) \cong H^{2i+1}(X/G)\cong \Z_2\oplus\cdots\oplus \Z_2$ ($n+1$-copies) with bases
  \begin{align*} &\{b^i,ab^{i-1}c,a^2b^{i-1},a^3b^{i-2}c,\cdots, a^{n-1}b^{i-k}c,a^{n}b^{i-k}\}, \text{ and }\\
&\{b^ic,ab^{i},a^2b^{i-1}c,a^3b^{i-1},\cdots, a^{n-1}b^{i-k+1},a^{n}b^{i-k}c\},
\end{align*} respectively.
 Again, by the exactness of  Gysin sequence,  $\ker p_{2i}^*=\im\rho_{2i}\cong \Z_2\oplus\cdots\oplus \Z_2$ ($k$-copies) and $\ker p^*_{2i+1}=\im \rho_{2i+1}\cong \Z_2\oplus\cdots\oplus \Z_2$ ($k+1$-copies) with bases $\{a^{2j+1}b^{i-j-1}c|0\leq j\leq k-1\}$ and $\{a^{2j}b^{i-j}c|0\leq j\leq k\}$, respectively.  This gives that $\im p^*_{2i}\cong \Z_2\oplus\cdots\oplus \Z_2$ ($k+1$-copies) and $\im p^*_{2i+1}\cong \Z_2\oplus\cdots\oplus \Z_2$ ($k$-copies) with bases $\{x^{2j}y^{i-j}|0\leq j\leq k\}$ and $\{x^{2j+1}y^{i-j}|0\leq j\leq k-1\}$, respectively. We observe that $H^j(X)\cong \im p^*_j\oplus \im\rho_j, j=2i,2i+1$.
  So, we get
  \begin{align*}
  H^{2i}(X)\cong H^{2i+1}(X)\cong \Z_2\oplus\cdots\oplus \Z_2\,(n+1\text{-copies}) 
  \end{align*} 
  with bases $\{x^{2j}y^{i-j},u_{2i}^{2l+1}|0\leq j\leq k,0\leq l\leq k-1\}$ and $\{x^{2j+1}y^{i-j},u_{2i+1}^{2l}|0\leq j\leq k-1,0\leq l\leq k\}$, respectively, where $\rho_{2i}(u_{2i}^{2l+1})=a^{2l+1}b^{i-l-1}c,0\leq l\leq k-1  \text{ and }\rho_{2i+1}(u_{2i+1}^{2l})=a^{2l}b^{i-l}c,0\leq l\leq k$.

 For $m<  i\leq m+\frac{n}{2}$, we have $H^{2i}(X/G) \cong  \Z_2\oplus\cdots\oplus \Z_2$ ($n+2m-2i+2$-copies) and $H^{2i+1}(X/G)\cong \Z_2\oplus\cdots\oplus \Z_2$ ($n+2m-2i+1$-copies) with bases 
 \begin{align*}
 &\{a^{2i-2m-1}b^mc,a^{2i-2m}b^{m},a^{2i-2m+1}b^{m-1}c,a^{2i-2m+2}b^{m-1},\cdots,  a^{n-1}b^{i-k}c,a^{n}b^{i-k}\}, \text{ and }\\ 
 &\{a^{2i-2m}b^mc,a^{2i-2m+1}b^{m},a^{2i-2m+2}b^{m-1}c,a^{2i-2m+3}b^{m-1},\cdots, a^{n-1}b^{i-k+1},a^{n}b^{i-k}c\}, \end{align*} respectively. 
 Note that  $\ker p_{2i}^*=\im\rho_{2i}\cong \Z_2\oplus\cdots\oplus \Z_2$ ($k+m-i+1$-copies) and $\ker p^*_{2i+1}=\im \rho_{2i+1}\cong \Z_2\oplus\cdots\oplus \Z_2$ ($k+m-i+1$-copies) with bases $\{a^{2j+1}b^{i-j-1}c|i-m-1\leq j\leq k-1\}$ and $\{a^{2j}b^{i-j}c|i-m\leq j\leq k\}$, respectively.  This implies that $\im p^*_{2i}\cong \Z_2\oplus\cdots\oplus \Z_2$ ($k+m-i+1$-copies) and $\im p^*_{2i+1}\cong \Z_2\oplus\cdots\oplus \Z_2$ ($k+m-i$-copies) with bases $\{x^{2j}y^{i-j}|i-m\leq j\leq k\}$ and $\{x^{2j+1}y^{i-j}|i-m\leq j\leq k-1\}$, respectively. 
So, we get
 \begin{align*}
 	&H^{2i}(X)\cong  \Z_2\oplus\cdots\oplus \Z_2\,(n+2m-2i+2\text{-copies}) \text{, and}\\
 		&H^{2i+1}(X)\cong  \Z_2\oplus\cdots\oplus \Z_2\,(n+2m-2i+1\text{-copies})
 \end{align*} 
 with bases $\{x^{2j}y^{i-j},u_{2i}^{2l+1}|i-m\leq j\leq k,i-m-1\leq l\leq k-1\}$ and $\{x^{2j+1}y^{i-j},u_{2i+1}^{2l}|i-m\leq j\leq k-1,i-m\leq l\leq k\}$, respectively, where $\rho_{2i}(u_{2i}^{2l+1})=a^{2l+1}b^{i-l-1}c, i-m-1\leq l\leq k-1$ and $\rho_{2i+1}(u_{2i+1}^{2l})=a^{2l}b^{i-l}c,i-m\leq l\leq k$.

 Now, we compute the cohomology algebra of $X$. 
Consider the Leray Serre spectral sequence for the Borel fibration $X\stackrel{i}{\hookrightarrow}X_G\stackrel{\pi}{\rightarrow}B_G$ with $E_2^{s,t}=H^s(B_G)\otimes H^t(X)$.  
As $p^*_j=i^*_j\circ h^*$ for all $j\geq 0$, we get $d_r(1\otimes x)=0=d_r(1\otimes y)$ for all $r\geq 0$ and for all $i,j$;  $d_{r_{i,j}}(1\otimes u_j^i)\not=0$ for some $r_{i,j}>0$. Put $u_1^0=z$. So, $H^1(X)$ is generated by $\{x,z\}$. Trivially, $d_2(1\otimes z)=t^2\otimes 1$ and so $d_2(1\otimes xz)\not=0$. This implies that
$xz=\alpha x^2+\beta y+u_2^1,\alpha,\beta\in \Z_2$. So, 
 $H^2(X)$  is generated by $\{x^2,y,xz\}$.
 By induction,  we show that for $1\leq i<\frac{n}{2}$, $H^{2i-1}(X)$ and $H^{2i}(X)$ are generated by $\{xy^{i-1},x^3y^{i-2},\cdots,x^{2i-3}y,x^{2i-1},y^{i-1}z,x^2y^{i-2}z,\cdots x^{2i-4}yz,x^{2i-2}z\}$ and $\{y^i,x^2y^{i-1},\cdots, x^{2i-2}y,x^{2i},xy^{i-1}z,x^3y^{i-2}z,\cdots,x^{2i-1}z\}$, respectively. Now, we prove it for $i+1$. 
 We have $d_2(1\otimes y^iz)=t^2\otimes y^i, d_2(1\otimes x^2y^{i-1}z)=t^2\otimes x^2y^{i-1},\cdots, d_2(1\otimes x^{2i-2}yz)=t^2\otimes x^{2i-2}y, d_2(1\otimes x^{2i}z)=t^2\otimes x^{2i}.$ This gives that $y^iz,x^2y^{i-1}z,\cdots x^{2i-2}yz,x^{2i}z$ are nonzero elements of $H^{2i+1}(X)$. Therefore, 
 \begin{align*}
 	y^iz=&\alpha_1^0 xy^{i}+\alpha_3^0 x^3 y^{i-1}+\cdots+\alpha_{2i-1}^0 x^{2i-1} y+\alpha_{2i+1}^0 x^{2i+1}\\
 	&+\beta_0^0u^0_{2i+1}+\beta_2^0u^2_{2i+1}+\cdots+\beta^0_{2i-2}u_{2i+1}^{2i-2}+\beta_{2i}^0u^{2i}_{2i+1}\\
 	x^2y^{i-1}z=&\alpha_1^2 xy^{i}+\alpha_3^2 x^3 y^{i-1}+\cdots+\alpha_{2i-1}^2 x^{2i-1} y+\alpha_{2i+1}^2 x^{2i+1}\\
 	&+\beta_0^2u^0_{2i+1}+\beta_2^2u^2_{2i+1}+\cdots+\beta^2_{2i-2}u_{2i+1}^{2i-2}+\beta_{2i}^2u^{2i}_{2i+1}\\
 	\vdots\\
 	x^{2i}z=&\alpha_1^{2i} xy^{i}+\alpha_3^{2i} x^3 y^{i-1}+\cdots+\alpha_{2i-1}^{2i} x^{2i-1} y+\alpha_{2i+1}^{2i} x^{2i+1}\\
 		&+\beta_0^{2i}u^0_{2i+1}+\beta_2^{2i}u^2_{2i+1}+\cdots+\beta^{2i}_{2i-2}u_{2i+1}^{2i-2}+\beta_{2i}^{2i}u^{2i}_{2i+1}
\end{align*}
 where $\beta_j^k\not=0$ for some $ j\in\{0,2,\cdots, 2i\}$ and for all $k=0,2,\cdots, 2i$.  
  From this we get

 \begin{align*}
 	t^2\otimes	y^i=&\beta_0^0d_2(1\otimes u^0_{2i+1})+\beta_2^0d_2(1\otimes u^2_{2i+1})+\cdots+\beta_{2i}^0d_2(1\otimes u_{2i+1}^{2i})\\
 	t^2\otimes	x^2y^{i-1}=
 	&\beta_0^2d_2(1\otimes u^0_{2i+1})+\beta_2^2d_2(1\otimes u^2_{2i+1})+\cdots+\beta_{2i}^2d_2(1\otimes u^{2i}_{2i+1})\\
 	\vdots\\
 	 	t^2\otimes	x^{2i}=
 	&\beta_0^{2i}d_2(1\otimes u^0_{2i+1})+\beta_2^{2i}d_2(1\otimes u^2_{2i+1})+\cdots+\beta_{2i}^{2i}d_2(1\otimes u^{2i}_{2i+1})
 \end{align*}
 As the set $\{y^i,x^2y^{i-1},\cdots, x^{2i-2}y,x^{2i}\}$ is linearly independent,  the coefficient matrix
 $$
 \begin{bmatrix}
 	\beta_0^0 & \beta_2^0 &\cdots &\beta_{2i}^0\\
 		\beta_0^2 & \beta_2^2 &\cdots &\beta_{2i}^2\\
 		\vdots &\vdots &\vdots &\vdots \\
 			\beta_0^{2i} & \beta_2^{2i} &\cdots &\beta_{2i}^{2i}
 \end{bmatrix}
 $$
 is invertible. Then  $u^{2k}_{2i+1}$ is generated by  $\{xy^i,x^3y^{i-1},\cdots,x^{2i-1}y,x^{2i+1},y^iz,x^2y^{i-1}z,\cdots,$\\$x^{2i-2}z,x^{2i}z\}$ for all  $0\leq k\leq i$. So, the basis for $H^{2i+1}(X)$ is $\{xy^i,x^3y^{i-1},\cdots, x^{2i-3}y^2,x^{2i-1}y$\\$,x^{2i+1},y^iz,x^2y^{i-1}z,\cdots,x^{2i-2}yz,x^{2i}z\}$. 
  Also, we have $d_2(1\otimes xy^iz)=t^2\otimes xy^i,d_2(1\otimes x^3y^{i-1}z)=t^2\otimes x^3y^{i-1},d_2(1\otimes x^5y^{i-2}z)=t^2\otimes x^5y^{i-2},\cdots,d_2(1\otimes x^{2i+1}z)=t^2\otimes x^{2i+1}$. This implies that $xy^iz,x^3y^{i-1}z,\cdots,x^{2i+1}z$ are nonzero elements of $H^{2i+2}(X)$. So, we get
\begin{align*}
	xy^iz=&\alpha_0^1 y^{i+1}+\alpha_2^1 x^2 y^i+\cdots+\alpha_{2i-2}^1 x^{2i-2} y^2+\alpha_{2i}^1 x^{2i} y\\
	&+\beta_1^1u^1_{2i+2}+\beta_3^1u^3_{2i+2}+\cdots+\beta_{2i+1}^1u^{2i+1}_{2i+2}	\\
		x^3y^{i-1}z=&\alpha_0^3 y^{i+1}+\alpha_2^3 x^2 y^i+\cdots+\alpha_{2i-2}^3 x^{2i-2} y^2+\alpha_{2i}^3 x^{2i} y\\
		&+\beta_1^3u^1_{2i+2}+\beta_3^3u^3_{2i+2}+\cdots+\beta_{2i+1}^3u^{2i+1}_{2i+2}
			\end{align*}
		\begin{align*}
		\vdots\\
			x^{2i+1}z=&\alpha_0^{2i+1} y^{i+1}+\alpha_2^{2i+1}x^2 y^i+\cdots+\alpha_{2i-2}^{2i+1} x^{2i-2} y^2+\alpha_{2i}^{2i+1} x^{2i} y\\
			&+\beta_1^{2i+1}u^1_{2i+2}+\beta_3^{2i+1}u^3_{2i+2}+\cdots+\beta_{2i+1}^{2i+1}u^{2i+1}_{2i+2}
\end{align*}
where $\beta_j^k\not=0$ for some $ j\in \{1,3,\cdots, 2i+1\}$ and for all $k=1,3,\cdots, 2i+1$. Thus, we get
\begin{align*}
t^2\otimes	xy^i=&\beta_1^1d_2(1\otimes u^1_{2i+2})+\beta_3^1d_2(1\otimes u^3_{2i+2})+\cdots+\beta_{2i+1}^1d_2(1\otimes u^{2i+1}_{2i+2})\\
t^2\otimes	x^3y^{i-1}=
	&\beta_1^3d_2(1\otimes u^1_{2i+2})+\beta_3^3d_2(1\otimes u^3_{2i+2})+\cdots+\beta_{2i+1}^3d_2(1\otimes u^{2i+1}_{2i+2})\\
	\vdots\\
t^2\otimes	x^{2i+1}=
	&\beta_1^{2i+1}d_2(1\otimes u^1_{2i+2})+\beta_3^{2i+1}d_2(1\otimes u^3_{2i+2})+\cdots+\beta_{2i+1}^{2i+1}d_2(1\otimes u^{2i+1}_{2i+2})
\end{align*}
As the set $ \{xy^i,x^3y^{i-1},\cdots, x^{2i+1}\}$  is linearly independent, $u^k_{2i+2}$
 is generated by $\{y^{i+1},x^2y^{i},$\\$\cdots, x^{2i}y,xy^{i}z,x^3y^{i-1}z,\cdots, x^{2i+1}z\}$ for all $k=1,3,\cdots, 2i+1$. So, our assumption is true for $i+1$.  Similarly,  for $\frac{n}{2}\leq i\leq m$, bases for $H^{2i}(X)$ and $H^{2i+1}(X)$ are  $\{x^{2j}y^{i-j},x^{2l+1}y^{i-l-1}z$\\$|0\leq j\leq k,0\leq l\leq k-1\}$ and $\{x^{2j+1}y^{i-j},x^{2l}y^{i-l}z|0\leq j\leq k-1,0\leq l\leq k\}$, respectively, and for  $m< i\leq m+\frac{n}{2}$, bases for $H^{2i}(X)$ and $H^{2i+1}(X)$ are $\{x^{2j}y^{i-j},x^{2l+1}y^{i-l-1}z|i-m\leq j\leq k,i-m-1\leq l\leq k-1\}$ and $\{x^{2j+1}y^{i-j},x^{2l}y^{i-l}z|i-m\leq j\leq k-1,i-m\leq l\leq k\}$, respectively.

Now, suppose that  $n=2k+1$ for some $k$. For $n\leq 2i,2i+1\leq 2m$,  $ H^{n-1}(X/G)\cong \Z_2\oplus\cdots\oplus \Z_2$ ($2k+1$-copies) and  $H^{2i}(X/G) \cong H^{2i+1}(X/G)\cong \Z_2\oplus\cdots\oplus \Z_2$ ($2k+2$-copies) with bases 
\begin{align*}
	&\{b^k,ab^{k-1}c,a^2b^{k-1},a^3b^{k-2}c,\cdots,a^{2k-1}c,a^{2k}\},\\
	&\{b^i,ab^{i-1}c,a^2b^{i-1},a^3b^{i-2}c,\cdots,a^{n-1}b^{i-k},a^{n}b^{i-k-1}c\}, \text{ and }\\
&\{b^ic,ab^{i},a^2b^{i-1}c,a^3b^{i-1},\cdots, a^{n-1}b^{i-k}c,a^{n}b^{i-k}\},
\end{align*}
 respectively. Then we have $\ker p^*_{n-1}=\im \rho_{n-1}\cong \Z_2\oplus\cdots\oplus\Z_2$ ($k$-copies),
$\ker p^*_{2i}=\im \rho_{2i}\cong \Z_2\oplus\cdots\oplus\Z_2$ ($k+1$-copies) and
$\ker p^*_{2i+1}=\im \rho_{2i+1}\cong \Z_2\oplus\cdots\oplus\Z_2$ ($k+1$-copies) 
  with bases $\{a^{2j+1}b^{k-j-1}c|0\leq j\leq k-1\}$, $\{a^{2j+1}b^{i-j-1}c|0\leq j\leq k\}$ and $\{a^{2j}b^{i-j}c|0\leq j\leq k\}$, respectively. So, $\im p^*_{n-1}\cong\Z_2\oplus\cdots\oplus\Z_2$ ($k+1$-copies), $\im p^*_{2i}\cong\Z_2\oplus\cdots\oplus\Z_2$ ($k+1$ copies) and  $\im p^*_{2i+1}\cong\Z_2\oplus\cdots\oplus\Z_2$ ($k+1$ copies) with bases $\{x^{2j}y^{k-j}|0\leq j\leq k\}$,  $\{x^{2j}y^{i-j}|0\leq j\leq k\}$ and $\{x^{2j+1}y^{i-j}|0\leq i\le k\}$, respectively.
As we have done above, we get 
  \begin{align*}
  &	H^{n-1}(X)\cong \Z_2\oplus\cdots\oplus \Z_2\,(n\text{-copies}), \text{ and}
  	\end{align*}
  \begin{align*}
&	H^{2i}(X)\cong H^{2i+1}(X)\cong \Z_2\oplus\cdots\oplus \Z_2\,(n+1\text{-copies}) 
\end{align*} 
with bases $\{x^{2j}y^{k-j},u_{2k}^{2l+1}|0\leq j\leq k,0\leq l\leq k-1\}$, $\{x^{2j}y^{i-j},u_{2i}^{2j+1}|0\leq j\leq k\}$ and $\{x^{2j+1}y^{i-j},u_{2i+1}^{2j}|0\leq j\leq k\}$, respectively, where $\rho_{2k}(u_{2k}^{2l+1})=a^{2l+1}b^{k-l-1}c,0\leq l\leq k-1$, $\rho_{2i}(u_{2i}^{2j+1})=a^{2j+1}b^{i-j-1}c, 0\leq j\leq k$  and $\rho_{2i+1}(u_{2i+1}^{2j})=a^{2j}b^{i-j}c,0\leq j\leq k$.
Also, for  
$2m< 2i,2i+1\leq 2m+n+1$, we have
    $H^{2i}(X/G) \cong  \Z_2\oplus\cdots\oplus \Z_2$ ($n+2m-2i+2$-copies) and $ H^{2i+1}(X/G)\cong \Z_2\oplus\cdots\oplus \Z_2$ ($n+2m-2i+1$-copies) with bases
    \begin{align*} &\{a^{2i-2m-1}b^mc,a^{2i-2m}b^{m},a^{2i-2m+1}b^{m-1}c,a^{2i-2m+2}b^{m-1},\cdots ,  a^{n-1}b^{i-k},a^{n}b^{i-k-1}c\}, \text{ and }\\ 
    &\{a^{2i-2m}b^mc,a^{2i-2m+1}b^{m},a^{2i-2m+2}b^{m-1}c,a^{2i-2m+3}b^{m-1},\cdots, a^{n-1}b^{i-k}c,a^{n}b^{i-k}\},
 \end{align*} respectively. 
    Note that  $\ker p_{2i}^*=\im\rho_{2i}\cong \Z_2\oplus\cdots\oplus \Z_2$ ($k+m-i+2$-copies) and $\ker p^*_{2i+1}=\im \rho_{2i+1}\cong \Z_2\oplus\cdots\oplus \Z_2$ ($k+m-i+1$-copies) with bases $\{a^{2j+1}b^{i-j-1}c|i-m-1\leq j\leq k\}$ and $\{a^{2j}b^{i-j}c|i-m\leq j\leq k\}$, respectively.  This implies that $\im p^*_{2i}\cong \Z_2\oplus\cdots\oplus \Z_2$ ($k+m-i+1$-copies) and $\im p^*_{2i+1}\cong \Z_2\oplus\cdots\oplus \Z_2$ ($k+m-i+1$-copies) with bases $\{x^{2j}y^{i-j}|i-m\leq j\leq k\}$ and $\{x^{2j+1}y^{i-j}|i-m\leq j\leq k\}$, respectively. 
    So, we get
    \begin{align*}
    	&H^{2i}(X)\cong  \Z_2\oplus\cdots\oplus \Z_2\,(n+2m-2i+2\text{-copies}) \text{, and}\\
    	  	&H^{2i+1}(X)\cong  \Z_2\oplus\cdots\oplus \Z_2\,(n+2m-2i+1\text{-copies})
    \end{align*} 
    with bases $\{x^{2j}y^{i-j},u_{2i}^{2l+1}|i-m\leq j\leq k,i-m-1\leq l\leq k\}$ and $\{x^{2j+1}y^{i-j},u_{2i+1}^{2j}|i-m\leq$\\$ j\leq k\}$, respectively, where $\rho_{2i}(u_{2i}^{2l+1})=a^{2l+1}b^{i-l-1}c, i-m-1\leq l\leq k$ and $\rho_{2i+1}(u_{2i+1}^{2j})=a^{2j}b^{i-j}c,i-m\leq j\leq k$.

   For $1\leq i<n$, $H^i(X)$ has the same basis as in the case when $n$ is even. Again by induction,
 for $n\leq 2i,2i+1\leq 2m$,  bases for $H^{n-1}(X)$, $H^{2i}(X)$ and $H^{2i+1}(X)$ are $\{x^{2j}y^{k-j},x^{2l+1}y^{k-l-1}z|0\leq j\leq k,0\leq l\leq k-1\}$, $\{x^{2j}y^{i-j},x^{2l+1}y^{i-l-1}z|0\leq j,l\leq k\}$ and $\{x^{2j+1}y^{i-j},x^{2l}y^{i-l}z|0\leq j,l\leq k\}$, respectively, and for  $m< i\leq m+\frac{n}{2}$, bases for $H^{2i}(X)$ and $H^{2i+1}(X)$ are $\{x^{2j}y^{i-j},x^{2l+1}y^{i-l-1}z|i-m\leq j\leq k,i-m-1\leq l\leq k\}$ and $\{x^{2j+1}y^{i-j},x^{2l}y^{i-l}z|i-m\leq j,l\leq k\}$, respectively.

 As $d_2(1\otimes z^2)=0$, we get $z^2=\alpha x^2+\beta y$ for some $\alpha,\beta \in \Z_2$.
Thus the cohomology algebra of $X$ is given by $$\Z_2[x,y,z]/\langle x^{n+1},y^{m+1},z^2+\alpha x^2+\beta y\rangle,$$ where  $\deg x=\deg z=1$, $\deg y=2$ and $\alpha,\beta \in \Z_2$.
    This realizes possibility (i).

\noindent\textbf{Case\,(2):} When $\cup:H^0(X/G)\to H^1(X/G)$ maps 1 to  $a$.

\noindent Clearly, $\rho_i:H^i(X)\to H^i(X/G)$  is trivial for all $1\leq i< n$. For $0<2i,2i+1<n$, bases for $\ker p^*_{2i}$ and $\ker p^*_{2i+1}$ are $\{a^{2j+1}b^{i-j-1}c,a^{2l}b^{i-l}|0\leq j\leq i-1,1\leq l\leq i\}$ and $\{a^{2j+1}b^{i-j},a^{2l}b^{i-l}c|0\leq j\leq i,1\leq l\leq i\}$, respectively.  This implies that $\im p^*_{2i}\cong\im p^*_{2i+1}\cong \Z_2$ with bases $\{y^i\}$ and $\{xy^i\}$, respectively, where $p_1^*(c)=x$ and $p_2^*(b)=y$. So, $H^{2i}(X)\cong  H^{2i+1}(X)\cong \Z_2$ with  bases $\{y^i\}$ and $\{xy^i\}$, respectively.

 If $n=2k$ for some $k$, then for $n\leq 2i,2i+1\leq 2m+1$, 
$\{a^{2j+1}b^{i-j-1}c,a^{2l}b^{i-l}|0\leq j\leq k-1,1\leq l\leq k\}$ and $\{a^{2j+1}b^{i-j},a^{2l}b^{i-l}c|0\leq j\leq k-1,1\leq l\leq k\}$  are bases for $\ker p^*_{2i}$ and $\ker p^*_{2i+1}$, respectively. This gives that  $\im p^*_{2i}\cong \im p^*_{2i+1}\cong \Z_2$ with bases $\{y^i\}$ and $\{xy^i\}$, respectively, and $\im \rho_{2i}\cong\im\rho_{2i+1}\cong\Z_2$ with bases $\{a^nb^{i-k}\}$ and $\{a^nb^{i-k}c\}$, respectively.  Consequently, $H^{2i}(X)\cong H^{2i+1}(X)\cong\Z_2\oplus\Z_2$ with bases $\{y^i,u^n_{2i}\}$ and $\{xy^i,u^n_{2i+1}\}$, respectively, where $\rho_{2i}(u^n_{2i})=a^nb^{i-k}$ and $\rho_{2i+1}(u^n_{2i+1})=a^nb^{i-k}c$. If $2m+1<2i,2i+1\leq 2m+n+1$ then bases for $\ker p^*_{2i}$ and $\ker p^*_{2i+1}$ are 
$\{a^{2j+1}b^{i-j-1}c,a^{2l}b^{i-l}|i-m-1\leq j\leq k-1,i-m\leq l\leq k\}$ and $\{a^{2j+1}b^{i-j},a^{2l}b^{i-l}c|i-m\leq j\leq k-1,i-m\leq l\leq k\}$, respectively. This implies that $p^*_{2i},p^*_{2i+1}$ are trivial and $\im \rho_{2i}\cong\im\rho_{2i+1}\cong\Z_2$ with bases $\{a^nb^{i-k}\}$ and $\{a^nb^{i-k}c\}$, respectively. Thus, $H^{2i}(X)\cong H^{2i+1}(X)\cong\Z_2$ with bases $\{u_{2i}^n\}$ and $\{u^{n}_{2i+1}\}$ respectively, where $\rho_{2i}(u^n_{2i})=a^nb^{i-k}$ and $\rho_{2i+1}(u^n_{2i+1})=a^nb^{i-k}c$. Trivially, $x^2=y^{m+1}=0$.
Put $u_{2k}^n=z$.  Then by the edge homomorphism, $d_r(1\otimes x)=d_r(1\otimes y)=0$ for all $r>0$ and $d_{r'}(1\otimes z)\not =0$ for some $r'>0$. As $X/G\sim_{2}\mathbb{RP}^1\times\mathbb{RP}^n\times\mathbb{CP}^m$, $r'$ must be $n+1$. Note that  for all $0\leq i\leq m$,  $xy^iz$ and $xy^i$ are nonzero elements of $H^{n+2i+1}(X)$ and $H^{n+2i}(X)$, respectively. Therefore, $xy^iz=\alpha xy^{k+m}+\beta u^n_{n+2i+1}$ and $xy^i=\alpha' y^{k+m}+\beta' u^n_{n+2i}$ for some $\alpha,\alpha',\beta,\beta'\in \Z_2$, where $\beta$ and  $\beta'$ are nonzero. Note that for $n+2i>2m$, we get  $\alpha=\alpha'=0$.

As 
$d_r(1\otimes z^2)=0$ for all $r\geq 0$, we get $z^2=\alpha y^n$ for some  $\alpha\in \Z_2$ and $\alpha=0$ for $n>m$. Thus, the cohomology algebra of $X$ is given by $$\Z_2[x,y,z]/\langle x^2,y^{m+1},z^2+\alpha y^n\rangle,$$ where $\deg x=1,\deg y=2,\deg z=n$ and $\alpha=0$ if $n>m$. This realizes possibility (ii).

 Now,  let $n=2k+1$ for some $k\geq 0$.  For $n\leq 2i,2i+1\leq 2m+1$, we get $H^{2i}(X)\cong \Z_2\oplus\Z_2\cong H^{2i+1}(X)$ with bases $\{y^i,u^n_{2i}\}$ and $\{xy^i,u^n_{2i+1}\}$, respectively, where $\rho_{2i}(u^n_{2i})=a^nb^{i-k-1}c$ and $ \rho_{2i+1}(u^n_{2i+1})=a^nb^{i-k}$. Also, for $2m+1<2i,2i+1\leq 2m+n+1$,  we have $H^{2i}(X)\cong \Z_2\cong H^{2i+1}(X)$ with bases $\{u^n_{2i}\}$ and $\{u^n_{2i+1}\}$, respectively, where $\rho_{2i}(u^n_{2i})=a^nb^{i-k-1}c$ and $\rho_{2i+1}(u^n_{2i+1})=a^nb^{i-k}$. It is easy to observe that $X$  has the same cohomology algebra as in the case when $n$ is even.

 \noindent\textbf{Case\,(3):} When $\cup:H^0(X/G)\to H^4(X/G)$ maps $1$ to $a+c$. 
 
 Also, in this case $\rho_i:H^i(X)\to H^i(X/G)$ and $p^*_j$ are trivial for all $1\leq i<n$ and $2m+1<j\leq 2m+n+1$. Similar calculations show that for $1\leq i<n$,   $H^{2i}(X)\cong H^{2i+1}(X)\cong \Z_2$ with bases $\{y^i\}$ and $\{xy^i\}$, respectively, where $x=p^*_1(a)=p_1^*(c),y=p_2^*(b)$. For $n\leq 2i,2i+1\leq 2m+1$,  $H^{2i}(X)\cong H^{2i+1}(X)\cong \Z_2\oplus\Z_2$ with bases $\{y^i,u_{2i}\}$, $\{xy^i,u_{2i+1}\}$, respectively; and for  $2m+1<2i,2i+1\leq 2m+n+1$, we have $H^{2i}(X)\cong H^{2i+1}(X)\cong \Z_2$ with bases $\{u_{2i}\}$, $\{u_{2i+1}\}$, respectively, where 
 $\rho_{2i}(u_{2i})=a^{n-1}b^{i-k}c+a^nb^{i-k}$ and $\rho_{2i+1}(u_{2i+1})=a^nb^{i-k}c$ if $n=2k$; and $\rho_{2i}(u_{2i})=a^{n}b^{i-k-1}c$ and $\rho_{2i+1}(u_{2i+1})=a^{n-1}b^{i-k}c+a^nb^{i-k}$ if $n=2k+1$. 
 Consequently, the cohomology algebra of $X$ is given by  $$ \Z_2[x,y,z]/\langle x^2,y^{m+1},z^2+\alpha y^n\rangle,$$ where $\deg x=1,\deg y=2,\deg z=n$ and $\alpha=0$ if $n>m$. This realizes possibility (ii).
 
Similarly, we get the same cohomology algebra   for  the case when $2m\leq n$.\qedhere
  \end{proof}

\begin{proof}[Proof of Theorem \ref{theorem 5}]
As $X/G\sim_2\mathbb{CP}^n\times \mathbb{HP}^m\times \mathbb{RP}^2$, we get $H^*(X/G)=\Z_2[a,b,c]/\langle a^{n+1}$\\$,b^{m+1},c^3  \rangle$, where $\deg a=2,\deg b=4$ and $\deg c=1$. Since $X$ is  connected,  $H^0(X)\cong \Z_2$. By Proposition \ref{H^j(X/G)=0},  $H^i(X)=0$ for all $i>4m+2n+2$. Firsty, assume that $1\leq n<2m$.
	Consider the Gysin sequence of the sphere bundle $G\hookrightarrow X\stackrel{p}{\to}X/G$:
\begin{align*}
	\cdots\longrightarrow H^{i-1}(X/G) \stackrel{\cup}{\longrightarrow} H^i(X/G)\stackrel{p^*_i}{\longrightarrow} H^i(X)\stackrel{\rho_i }{\longrightarrow}H^i(X/G)\stackrel{\cup}{\longrightarrow}H^{i+1}(X/G)\longrightarrow\cdots
\end{align*}
which begins with
\begin{align*}
	0\longrightarrow H^0(X/G)\stackrel{p^*_0}{\longrightarrow}H^0(X) \stackrel{\rho_0}{\longrightarrow}H^0(X/G)\stackrel{\cup}{\longrightarrow}H^1(X/G)
	\stackrel{p^*_1}{\longrightarrow} H^1(X)\longrightarrow\cdots.
\end{align*}
  By the exactness of  Gysin sequence,  the Euler class of the associated principal $G$-bundle $G\hookrightarrow X\to X/G$ must be  $c$.

For $0\leq i< \frac{n}{2}$, we have $H^{4i}(X/G) \cong \Z_2\oplus\cdots\oplus \Z_2$ ($2i+1$-copies), $ H^{4i+1}(X/G)\cong \Z_2\oplus\cdots \oplus\Z_2$ ($i+1$-copies), $ H^{4i+2}(X/G)\cong \Z_2\oplus\cdots \oplus\Z_2$ ($2i+2$-copies) and $ H^{4i+3}(X/G)\cong \Z_2\oplus\cdots \oplus\Z_2$ ($i+1$-copies) with bases
\begin{align*} &\{b^{i},ab^{i-1}c^2,a^2b^{i-1},a^3b^{i-2}c^2,\cdots,a^{2i-1}c^2,a^{2i}\},\\ 
&\{b^ic,a^2b^{i-1}c,\cdots, a^{2i-2}bc,a^{2i}c\},\\ &\{b^{i}c^2,ab^{i},a^2b^{i-1}c^2,a^3b^{i-1},\cdots,a^{2i}c^2,a^{2i+1}\}, \text{ and }\\ &\{ab^{i}c,a^3b^{i-1}c,\cdots,a^{2i-1}bc,a^{2i+1}c\},\end{align*}     respectively.
Again, by the  exactness of  Gysin sequence, we get  $\ker p_{4i}^*=\im \rho_{4i}
\cong\Z_2\oplus\cdots\oplus\Z_2$ ($i$-copies) with basis $\{a^{2j+1}b^{i-j-1}c^2| 0\leq j\leq i-1\}$, $\ker p_{4i+1}^*=
H^{4i+1}(X/G)$, $\ker p_{4i+2}^*=\im \rho_{4i+2}
\cong\Z_2\oplus\cdots\oplus\Z_2$ ($i+1$-copies) with basis $\{a^{2j}b^{i-j}c^2| 0\leq j\leq i\}$
 and $\ker p_{4i+3}^*=H^{4i+3}(X/G)$. This implies that $\im p ^*_{4i}\cong\im p ^*_{4i+2}\cong \Z_2\oplus\cdots\oplus\Z_2$ ($i+1$-copies) with bases $\{x^{2j}y^{i-j}|0\leq j\leq i\}$ and  $\{x^{2j+1}y^{i-j}|0\leq j\leq i\}$, respectively, where $p^*_2(a)=x,p_4^*(b)=y$. Clearly, $\im p^*_{4i+j}=\im\rho_{4i+j}=0,j=1,3$.
Thus,
  \begin{align*}
  	&H^{4i+1}(X)=H^{4i+3}(X)=0\\
	&H^{4i}(X)\cong  \Z_2\oplus\cdots\oplus\Z_2\, (2i+1\text{-copies}), \text{ and}\\
	&H^{4i+2}(X)\cong  \Z_2\oplus\cdots\oplus\Z_2\,(2i+2\text{-copies})
\end{align*} with  bases $\{x^{2j}y^{i-j},u_{4i}^{2l+1}|0\leq j\leq i, 0\leq l\leq i-1\}$ and  $\{x^{2j+1}y^{i-j},u_{4i+2}^{2j}|0\leq j\leq i\}$, respectively, where  $\rho_{4i}(u_{4i}^{2l+1})=a^{2l+1}b^{i-l-1}c^2,0\leq l\leq i-1 \text{ and }\rho_{4i+2}(u_{4i+2}^{2j})=a^{2j}b^{i-j}c^2,0\leq j\leq i$.

Let us assume that $n=2k$ for some $k>0$. In this case, for $2n\leq 4i+r\leq 4m,0\leq r\leq 3$, we have $H^{4i}(X/G) \cong \Z_2\oplus\cdots\oplus \Z_2$ ($2k+1$-copies), $ H^{4i+1}(X/G)\cong \Z_2\oplus\cdots \oplus\Z_2$ ($k+1$-copies), $ H^{4i+2}(X/G)\cong \Z_2\oplus\cdots \oplus\Z_2$ ($2k+1$-copies) and $ H^{4i+3}(X/G)\cong \Z_2\oplus\cdots \oplus\Z_2$ ($k$-copies) with bases
\begin{align*} &\{b^{i},ab^{i-1}c^2,a^2b^{i-1},a^3b^{i-2}c^2,\cdots, a^{2k-1}b^{i-k}c^2,a^{2k}b^{i-k}\},\\  
&\{b^ic,a^2b^{i-1}c,\cdots, a^{2k-2}b^{i-k+1}c,a^{2k}b^{i-k}c\},\\ &\{b^{i}c^2,ab^{i},a^2b^{i-1}c^2,a^3b^{i-1},\cdots,a^{2k-1}b^{i-k+1},a^{2k}b^{i-k}c^2\},\text{ and }\\ &\{ab^{i}c,a^3b^{i-1}c,\cdots,a^{2k-3}b^{i-k+2}c,a^{2k-1}b^{i-k+1}c\},\end{align*}     respectively.
Then by the  exactness of  Gysin sequence, we get  $\ker p_{4i}^*=\im \rho_{4i}
\cong\Z_2\oplus\cdots\oplus\Z_2$ ($k$-copies) with basis $\{a^{2j+1}b^{i-j-1}c^2| 0\leq j\leq k-1\}$, $\ker p_{4i+2}^*=\im \rho_{4i+2}
\cong\Z_2\oplus\cdots\oplus\Z_2$ ($k+1$-copies) with basis $\{a^{2j}b^{i-j}c^2| 0\leq j\leq k\}$ and  $\ker p_{4i+j}^*=H^{4i+j}(X/G),j=1,3$. This gives that $\im p ^*_{4i}\cong \Z_2\oplus\cdots\oplus\Z_2$ ($k+1$-copies) with basis $\{x^{2j}y^{i-j}|0\leq j\leq k\}$ and $\im p ^*_{4i+2}\cong \Z_2\oplus\cdots\oplus\Z_2$ ($k$-copies) with basis $\{x^{2j+1}y^{i-j}|0\leq j\leq k-1\}$. Also, $\im p ^*_{4i+j}=\im\rho_{4i+j}=0,j=1$ and $3$, Therefore,
\begin{align*}
	&H^{4i+1}(X)=H^{4i+3}(X)=0\\
	&H^{4i}(X)\cong  \Z_2\oplus\cdots\oplus\Z_2\, (n+1\text{-copies}), \text{ and}\\
	&H^{4i+2}(X)\cong  \Z_2\oplus\cdots\oplus\Z_2\,(n+1\text{-copies})
\end{align*} with  bases $\{x^{2j}y^{i-j},u_{4i}^{2l+1}|0\leq j\leq k, 0\leq l\leq k-1\}$ and  $\{x^{2j+1}y^{i-j},u_{4i+2}^{2l}|0\leq j\leq k-1,0\leq l\leq k\}$, respectively, where  $\rho_{4i}(u_{4i}^{2l+1})=a^{2l+1}b^{i-l-1}c^2,0\leq l\leq k-1\text{ and }\rho_{4i+2}(u_{4i+2}^{2l})=a^{2l}b^{i-l}c^2,0\leq l\leq k$.  
Now, for $4m< 4i+r\leq 4m+2n+2,0\leq r\leq 3$; we have $H^{4i}(X/G) \cong \Z_2\oplus\cdots\oplus \Z_2$ ($2m+n-2i+2$-copies), $ H^{4i+1}(X/G)\cong \Z_2\oplus\cdots \oplus\Z_2$ ($m+k-i+1$-copies), $ H^{4i+2}(X/G)\cong \Z_2\oplus\cdots \oplus\Z_2$ ($2m+n-2i+1$-copies) and $ H^{4i+3}(X/G)\cong \Z_2\oplus\cdots \oplus\Z_2$ ($m+k-i$-copies) with bases
\begin{align*}  &\{a^{2i-2m-1}b^{m}c^2,a^{2i-2m}b^{m},a^{2i-2m+1}b^{m-1}c^2,a^{2i-2m+2}b^{m-1},\cdots,a^{2k-1}b^{i-k}c^2,a^{2k}b^{i-k}\},\\  
&\{a^{2i-2m}b^mc,a^{2i-2m+2}b^{m-1}c,\cdots,a^{2k-2}b^{i-k+1}c,a^{2k}b^{i-k}c\},\\ &\{a^{2i-2m}b^{m}c^2,a^{2i-2m+1}b^{m},a^{2i-2m+2}b^{m-1}c^2,a^{2i-2m+3}b^{m-1},\cdots,a^{2k-1}b^{i-k+1},a^{2k}b^{i-k}c^2\},\text{ and }\\ &\{a^{2i-2m+1}b^{m}c,a^{2i-2m+3}b^{m-1}c,\cdots,a^{2k-3}b^{i-k+2}c,a^{2k-1}b^{i-k+1}c\},\end{align*}     respectively.
Then by the  exactness of  Gysin sequence, we get  $\ker p_{4i}^*=\im \rho_{4i}
\cong\Z_2\oplus\cdots\oplus\Z_2$ ($m+k-i+1$-copies) with basis $\{a^{2j+1}b^{i-j-1}c^2| i-m-1\leq j\leq k-1\}$, $\ker p_{4i+j}^*=H^{4i+j}(X/G),j=1,3$; and $\ker p_{4i+2}^*=\im \rho_{4i+2}
\cong\Z_2\oplus\cdots\oplus\Z_2$ ($m+k-i+1$-copies) with basis $\{a^{2j}b^{i-j}c^2| i-m\leq j\leq k\}$. This gives that $\im p ^*_{4i}\cong \Z_2\oplus\cdots\oplus\Z_2$ ($m+k-i+1$-copies) with basis $\{x^{2j}y^{i-j}|i-m\leq j\leq k\}$,  $\im p ^*_{4i+j}=\im\rho_{4i+j}=0,j=1,3$; and $\im p ^*_{4i+2}\cong \Z_2\oplus\cdots\oplus\Z_2$ ($m+k-i$-copies) with basis $\{x^{2j+1}y^{i-j}|i-m\leq j\leq k-1\}$. Therefore,
\begin{align*}
	&H^{4i+1}(X)=H^{4i+3}(X)=0\\
	&H^{4i}(X)\cong  \Z_2\oplus\cdots\oplus\Z_2\, (2m+n-2i+2\text{-copies}), \text{ and}\\
	&H^{4i+2}(X)\cong  \Z_2\oplus\cdots\oplus\Z_2\,(2m+n-2i+1\text{-copies})
\end{align*} with  bases $\{x^{2j}y^{i-j},u_{4i}^{2l+1}|i-m\leq j\leq k, i-m-1\leq l\leq k-1\}$ and  $\{x^{2j+1}y^{i-j},u_{4i+2}^{2l}|i-m\leq j\leq k-1,i-m\leq l\leq k\}$, respectively, where  $\rho_{4i}(u_{4i}^{2l+1})=a^{2l+1}b^{i-l-1}c^2,i-m-1\leq l\leq k-1 \text{ and }\rho_{4i+2}(u_{4i+2}^{2l})=a^{2l}b^{i-l}c^2,i-m\leq l\leq k$.

Next, we compute the cohomology algebra of $X$. 
Consider the Leray-Serre spectral sequence for the Borel fibration $X\stackrel{i}{\hookrightarrow}X_G\stackrel{\pi}{\rightarrow}B_G$. By the edge homomorphisms,  we get $d_r(1\otimes x)=0=d_r(1\otimes y)$ for all $r> 0$ and for all $i,j$;  $d_{r_{i,j}}(1\otimes u_j^i)\not=0$ for some $r_{i,j}>0$. Put $u_2^0=z$. So, $H^2(X)$ is generated by $\{x,z\}$. Clearly, $d_3(1\otimes z)=t^3\otimes 1$ and so $d_3(1\otimes xz)=t^3\otimes x\not=0$. This implies that
$xz=\alpha x^2+\beta y+ u_4^1,\alpha,\beta\in \Z_2$. So, 
$H^4(X)$  is generated by $\{x^2,y,xz\}$.
By the induction, as in Theorem \ref{theorem 4},  we get that for $1\leq 4i+r<2n,r=0,2$, $H^{4i}(X)$ and $H^{4i+2}(X)$ have bases $\{x^{2j}y^{i-j},x^{2l+1}y^{i-l-1}z|0\leq j\leq i,0\leq l\leq i-1\}$ and $\{x^{2j+1}y^{i-j},x^{2l}y^{i-l}z|0\leq j,l\leq i\}$, respectively. 
Similarly,  for $2n\leq 4i+r\leq 4m,r=0,2$, bases for $H^{4i}(X)$ and $H^{4i+2}(X)$ are  $\{x^{2j}y^{i-j},x^{2l+1}y^{i-l-1}z|0\leq j\leq k,0\leq l\leq k-1\}$ and $\{x^{2j+1}y^{i-j},x^{2l}y^{i-l}z|0\leq j\leq k-1,0\leq l\leq k\}$, respectively, and for  $4m< 4i+r\leq 4m+2n+2,r=0,2$, bases for $H^{4i}(X)$ and $H^{4i+2}(X)$ are $\{x^{2j}y^{i-j},x^{2l+1}y^{i-l-1}z|i-m\leq j\leq k,i-m-1\leq l\leq k-1\}$ and $\{x^{2j+1}y^{i-j},x^{2l}y^{i-l}z|i-m\leq j\leq k-1,i-m\leq l\leq k\}$, respectively.

Now, we take the case when $n=2k+1$ for some $k>0$. In this case, for $2n\leq 4i+r\leq 4m, 0\leq r\leq 3$, we have $H^{4i}(X/G) \cong \Z_2\oplus\cdots\oplus \Z_2$ ($2k+2$-copies), $ H^{4i+1}(X/G)\cong \Z_2\oplus\cdots \oplus\Z_2$ ($k+1$-copies), $ H^{4i+2}(X/G)\cong \Z_2\oplus\cdots \oplus\Z_2$ ($2k+2$-copies) and $ H^{4i+3}(X/G)\cong \Z_2\oplus\cdots \oplus\Z_2$ ($k+1$-copies) with bases
\begin{align*} &\{b^{i},ab^{i-1}c^2,a^2b^{i-1},a^3b^{i-2}c^2,\cdots,a^{2k}b^{i-k},a^{2k+1}b^{i-k-1}c^2\},\\  
&\{b^ic,a^2b^{i-1}c,\cdots,a^{2k-2}b^{i-k+1}c,a^{2k}b^{i-k}c\},\\ &\{b^{i}c^2,ab^{i},a^2b^{i-1}c^2,a^3b^{i-1},\cdots,a^{2k}b^{i-k}c^2,a^{2k+1}b^{i-k}\},\text{ and }\\ &\{ab^{i}c,a^3b^{i-1}c,\cdots,a^{2k-1}b^{i-k+1}c,a^{2k+1}b^{i-k}c\},\end{align*}     respectively.
Again, by the  exactness of  Gysin sequence, we get  $\ker p_{4i}^*=\im \rho_{4i}
\cong\Z_2\oplus\cdots\oplus\Z_2$ ($k+1$-copies) with basis $\{a^{2j+1}b^{i-j-1}c^2| 0\leq j\leq k\}$, $\ker p_{4i+2}^*=\im \rho_{4i+2}
\cong\Z_2\oplus\cdots\oplus\Z_2$ ($k+1$-copies) with basis $\{a^{2j}b^{i-j}c^2| 0\leq j\leq k\}$ and  $\ker p_{4i+j}^*=H^{4i+j}(X/G),j=1,3$. This gives that $\im p ^*_{4i}\cong \im p ^*_{4i+2}\cong \Z_2\oplus\cdots\oplus\Z_2$ ($k+1$-copies) with bases $\{x^{2j}y^{i-j}|0\leq j\leq k\}$ and  $\{x^{2j+1}y^{i-j}|0\leq j\leq k\}$, respectively. Note that $\im p ^*_{4i+j}=\im\rho_{4i+j}=0,j=1$ and $3$. Thus,
\begin{align*}
	&H^{4i+1}(X)=H^{4i+3}(X)=0\\
	&H^{4i}(X)\cong  \Z_2\oplus\cdots\oplus\Z_2\, (n+1\text{-copies}), \text{ and}\\
	&H^{4i+2}(X)\cong  \Z_2\oplus\cdots\oplus\Z_2\,(n+1\text{-copies})
\end{align*} with  bases $\{x^{2j}y^{i-j},u_{4i}^{2l+1}|0\leq j,l\leq k\}$ and  $\{x^{2j+1}y^{i-j},u_{4i+2}^{2l}|0\leq j,l\leq k\}$, respectively, where  $\rho_{4i}(u_{4i}^{2l+1})=a^{2l+1}b^{i-l-1}c^2,0\leq l\leq k\text{ and }\rho_{4i+2}(u_{4i+2}^{2l})=a^{2l}b^{i-l}c^2,0\leq l\leq k$.  
Now, for $4m< 4i+r\leq 4m+2n+2,0\leq r\leq 3$; we have $H^{4i}(X/G) \cong \Z_2\oplus\cdots\oplus \Z_2$ ($2m+n-2i+2$-copies), $ H^{4i+1}(X/G)\cong \Z_2\oplus\cdots \oplus\Z_2$ ($m+k-i+1$-copies), $ H^{4i+2}(X/G)\cong \Z_2\oplus\cdots \oplus\Z_2$ ($2m+n-2i+1$-copies) and $ H^{4i+3}(X/G)\cong \Z_2\oplus\cdots \oplus\Z_2$ ($m+k-i+1$-copies) with bases
\begin{align*}  &\{a^{2i-2m-1}b^{m}c^2,a^{2i-2m}b^{m},a^{2i-2m+1}b^{m-1}c^2,a^{2i-2m+2}b^{m-1},\cdots,a^{2k}b^{i-k},a^{2k+1}b^{i-k-1}c^2\},\\  
&\{a^{2i-2m}b^mc,a^{2i-2m+2}b^{m-1}c,\cdots, a^{2k-2}b^{i-k+1}c,a^{2k}b^{i-k}c\},\\ &\{a^{2i-2m}b^{m}c^2,a^{2i-2m+1}b^{m},a^{2i-2m+2}b^{m-1}c^2,a^{2i-2m+3}b^{m-1},\cdots,a^{2k}b^{i-k}c^2,a^{2k+1}b^{i-k}\},\text{ and }\\ &\{a^{2i-2m+1}b^{m}c,a^{2i-2m+3}b^{m-1}c,\cdots,a^{2k-1}b^{i-k+1}c,a^{2k+1}b^{i-k}c\},\end{align*}     respectively.
Now, $\ker p_{4i}^*=\im \rho_{4i}
\cong\Z_2\oplus\cdots\oplus\Z_2$ ($m+k-i+2$-copies) with basis $\{a^{2j+1}b^{i-j-1}c^2| i-m-1\leq j\leq k\}$, $\ker p_{4i+j}^*=H^{4i+j}(X/G),j=1,3$; and $\ker p_{4i+2}^*=\im \rho_{4i+2}
\cong\Z_2\oplus\cdots\oplus\Z_2$ ($m+k-i+1$-copies) with basis $\{a^{2j}b^{i-j}c^2| i-m\leq j\leq k\}$. Consequently, $\im p ^*_{4i}\cong\im p ^*_{4i+2}\cong \Z_2\oplus\cdots\oplus\Z_2$ ($m+k-i+1$-copies) with bases $\{x^{2j}y^{i-j}|i-m\leq j\leq k\}$  and $\{x^{2j+1}y^{i-j}|i-m\leq j\leq k\}$, respectively. Also,   $\im p ^*_{4i+j}=\im\rho_{4i+j}=0,j=1,3$. Therefore,
\begin{align*}
	&H^{4i+1}(X)=H^{4i+3}(X)=0\\
	&H^{4i}(X)\cong  \Z_2\oplus\cdots\oplus\Z_2\, (2m+n-2i+2\text{-copies}), \text{ and}\\
	&H^{4i+2}(X)\cong  \Z_2\oplus\cdots\oplus\Z_2\,(2m+n-2i+1\text{-copies})
\end{align*} with  bases $\{x^{2j}y^{i-j},u_{4i}^{2l+1}|i-m\leq j\leq k, i-m-1\leq l\leq k\}$ and  $\{x^{2j+1}y^{i-j},u_{4i+2}^{2l}|i-m\leq j,l\leq k\}$, respectively, where  $\rho_{4i}(u_{4i}^{2l+1})=a^{2l+1}b^{i-l-1}c^2,i-m-1\leq l\leq k \text{ and }\rho_{4i+2}(u_{4i+2}^{2l})=a^{2l}b^{i-l}c^2,i-m\leq l\leq k$.  Now, we discuss the  cohomology algebra of $X$. By the repeated use of induction,   
 for $\frac{n}{2}\leq i\leq m$, bases for $H^{4i}(X)$ and $H^{4i+2}(X)$ are  $\{x^{2j}y^{i-j},x^{2l+1}y^{i-l-1}z|0\leq j\leq k,0\leq l\leq k\}$ and $\{x^{2j+1}y^{i-j},x^{2l}y^{i-l}z|0\leq j\leq k,0\leq l\leq k\}$, respectively, and for  $m< i\leq m+\frac{n}{2}$, bases for $H^{4i}(X)$ and $H^{4i+2}(X)$ are
 $\{x^{2j}y^{i-j},x^{2l+1}y^{i-l-1}z|i-m\leq j\leq k,i-m-1\leq l\leq k\}$ and $\{x^{2j+1}y^{i-j},x^{2l}y^{i-l}z|i-m\leq j\leq k,i-m\leq l\leq k\}$, respectively.

Clearly, $x^{n+1}=y^{m+1}=0$. As $d_2(1\otimes z^2)=0$, $y^2=\alpha x^2+\beta y$ for some $\alpha,\beta\in \Z_2$. Thus, the cohomology algebra of $X$ is 
$$ \Z_2[x,y,z]/\langle x^{n+1},y^{m+1},z^2+\alpha x^2+\beta y\rangle,$$ where $\deg x=2,\deg y=4,\deg z=2$ and $\alpha,\beta \in \Z_2$.

Similarly, we get the same cohomology algebra for the case when $n\geq 2m.$
\end{proof}

Now,  we realize the possibilities of our main Theorems.
\begin{example}\label{Example 1}
Consider, the antipodal action of  $G=\Z_2$ on $\mathbb{S}^1$ and trivial action of $G$ on $\mathbb{CP}^n$. Then  the diagonal action of $G$ on $X=\mathbb{S}^1\times \mathbb{CP}^n$ is a free action with the orbit space $X/G=\mathbb{RP}^1\times\mathbb{CP}^n$. Also, the map $[x_0,x_1,\cdots, x_{2n},x_{2n+1}]\to [-x_1,x_0\cdots, -x_{2n+1},x_{2n}]$ gives a free action of $G$  on  $\mathbb{RP}^{2n+1}$ with the orbit space $X/G\sim_{2}\mathbb{RP}^1\times\mathbb{CP}^n $ \cite{Hemant2008}.  These examples realizes both possibilities of Theorem \ref{theorem 0}.
\end{example}

\begin{example}\label{Example 2}
	Consider, the antipodal action of  $G=\Z_2$ on $\mathbb{S}^2$ and  trivial action of $G$ on $\mathbb{HP}^n$. Then  the diagonal action of $G$ on $X=\mathbb{S}^2\times \mathbb{HP}^n$ is a free action with the orbit space $X/G=\mathbb{RP}^2\times\mathbb{HP}^n$. Also, the map $[z_0,z_1,\cdots, z_{n-1},z_n]\to [-\overline{z_1},\overline{z_0}\cdots, -\overline{z_n},\overline{z_{n-1}}]$ gives a free action of $G$  on  $\mathbb{CP}^{2n+1}$ with the orbit space $X/G\sim_{2}\mathbb{RP}^2\times\mathbb{HP}^n $ \cite{Hemant2008}.  These examples realizes both possibilities of Theorem \ref{theorem 2}.
\end{example}

 \begin{example}
Consider, the antipodal action of  $G=\Z_2$ on $\mathbb{S}^1$ and trivial action on $\mathbb{RP}^n\times \mathbb{CP}^m$. Then the diagonal action of $G$  on $\mathbb{S}^1\times \mathbb{RP}^n\times \mathbb{CP}^m$  is a free action with the orbit space  $\mathbb{RP}^1\times \mathbb{RP}^n\times \mathbb{CP}^m$.  This realizes possibility (i) of Theorem \ref{theorem 4} when $\alpha=\beta=0$.  	
 Now, we consider trival action of $G$ on $\mathbb{RP}^{n}$ and  free action of $G$ on $\mathbb{RP}^{2m+1}$ as defined in Example \ref{Example 1}. Then $G$ acts freely on  $X=\mathbb{RP}^n\times\mathbb{RP}^{2m+1}$  with the orbit space $X/G\sim_{2}\mathbb{RP}^n\times\mathbb{RP}^1\times  \mathbb{CP}^m$. This realizes possibility (i) of Theroem \ref{theorem 4} when $\alpha=0,\beta=1$.
 Also, if we take antipodal action of $G$ on $\mathbb{S}^n$ and trivial action on $\mathbb{RP}^1\times  \mathbb{CP}^m$ then $G$ acts freely on $\mathbb{S}^n\times\mathbb{RP}^1\times  \mathbb{CP}^m$  with the orbit space $\mathbb{RP}^n\times\mathbb{RP}^1\times  \mathbb{CP}^m$. This realizes possibilty (ii) of Theorem \ref{theorem 4} when $\alpha=0$.
 \end{example}
\begin{example}
	Consider, the antipodal action of  $G=\Z_2$ on $\mathbb{S}^2$ and  trivial action on $  \mathbb{CP}^n\times\mathbb{HP}^m$. Then the diagonal action of $G$  on $\mathbb{S}^2\times  \mathbb{CP}^n\times\mathbb{HP}^m$ is a free action with the orbit space  $\mathbb{RP}^2\times  \mathbb{CP}^n\times\mathbb{HP}^m$.  This realizes Theorem \ref{theorem 5} when $\alpha=\beta=0$.  	
Take trival action of $G$ on $\mathbb{CP}^{n}$ and free action of $G$ on $\mathbb{CP}^{2m+1}$ as defined in Example \ref{Example 2}. Then $G$ acts freely on  $X=\mathbb{CP}^n\times\mathbb{CP}^{2m+1}$  with the orbit space $X/G\sim_{2}\mathbb{CP}^n\times\mathbb{RP}^2\times  \mathbb{HP}^m$. This realizes  Theorem \ref{theorem 5} when $\alpha=0,\beta=1$.	
\end{example}

\section*{Acknowledgement}
We are thankful to the referee for their suggestions to improve the paper, correcting typos  and grammatical  errors.

   \bibliography{sn-bibliography}

\end{document}